% --- Set document class and font size ---

\documentclass[letterpaper, 10pt]{article}

% --- Package imports ---

% Extended set of colors
\usepackage[dvipsnames]{xcolor}

\usepackage{
  amsmath, amsthm, amssymb, mathtools, dsfont,          % Math typesetting
  graphicx, wrapfig, subcaption, float,                            % Figures and graphics formatting
  listings, color, inconsolata, pythonhighlight,               % Code formatting
  fancyhdr, sectsty, hyperref, enumerate, enumitem, framed }   % Headers/footers, section fonts, links, lists

% lipsum is just for generating placeholder text and can be removed
\usepackage{hyperref, lipsum} 
\usepackage{siunitx}

\usepackage{cleveref}
\usepackage{multicol}
\usepackage{multirow}
\usepackage{makecell}
% \usepackage{subcaption}
% --- Fonts ---

\usepackage{newpxtext, newpxmath, inconsolata}
\usepackage{amsfonts}
\usepackage{pgfplots}
\pgfplotsset{compat=1.12}
\usepackage{tkz-fct}
\usepackage{tikz,forest}
\usetikzlibrary{patterns}
\usetikzlibrary{arrows.meta}
\useforestlibrary{edges}

\tikzset{
% decision/.style={diamond, minimum height=5pt, minimum width=10pt, inner sep=1pt},
decision/.style={diamond, inner sep=1pt},
chance/.style={circle, minimum width=10pt, draw=blue, fill=none, thick, inner sep=0pt},
}

\usepackage{tikz-cd}
\usepackage{enumitem}
\usepackage[title]{appendix}
\usepackage{booktabs}
\usepackage{dynkin-diagrams}

% --- Page layout settings ---

% Set page margins
% \usepackage[left=1.35in, right=1.35in, top=1.0in, bottom=.9in, headsep=.2in, footskip=0.35in]{geometry}
\usepackage{geometry}

% Anchor footnotes to the bottom of the page
\usepackage[bottom]{footmisc}
\usepackage{mathtools}
% Set line spacing

% Set spacing between paragraphs
\setlength{\parskip}{1.3mm}

% Allow multi-line equations to break onto the next page
\allowdisplaybreaks

% --- Page formatting settings ---

% Set image captions to be italicized
\usepackage[font={it,footnotesize}]{caption}

% Set link colors for labeled items (blue), citations (red), URLs (orange)
\hypersetup{colorlinks=true, linkcolor=RoyalBlue, citecolor=RedOrange, urlcolor=ForestGreen}

% Set font size for section titles (\large) and subtitles (\normalsize) 
\usepackage{titlesec}
% \titleformat{\section}{\large\bfseries}{{\fontsize{19}{19}\selectfont\textreferencemark}\;\; }{0em}{}
\titleformat{\section}{\large\bfseries}{\thesection\;\;\;}{0em}{}
\titleformat{\subsection}{\normalsize\bfseries\selectfont}{\thesubsection\;\;\;}{0em}{}

% Enumerated/bulleted lists: make numbers/bullets flush left
%\setlist[enumerate]{wide=2pt, leftmargin=16pt, labelwidth=0pt}
\setlist[itemize]{wide=0pt, leftmargin=16pt, labelwidth=10pt, align=left}

% --- Table of contents settings ---

\usepackage[subfigure]{tocloft}

% Reduce spacing between sections in table of contents
\setlength{\cftbeforesecskip}{.9ex}

% Remove indentation for sections
\cftsetindents{section}{0em}{0em}

% Set font size (\large) for table of contents title

% Remove numbers/bullets from section titles in table of contents
\makeatletter
\renewcommand{\cftsecpresnum}{\begin{lrbox}{\@tempboxa}}
\renewcommand{\cftsecaftersnum}{\end{lrbox}}
\makeatother

% --- Set path for images ---

\graphicspath{{Images/}{../Images/}}

% --- Math/Statistics commands ---

% Add a reference number to a single line of a multi-line equation
% Usage: "\numberthis\label{labelNameHere}" in an align or gather environment

% Shortcut for bold text in math mode, e.g. $\b{X}$

% Shortcut for bold Greek letters, e.g. $\bg{\beta}$

% Shortcut for calligraphic script, e.g. %\mc{M}$

% \mathscr{(letter here)} is sometimes used to denote vector spaces
\usepackage[mathscr]{euscript}

% Convergence: right arrow with optional text on top
% E.g. $\converge[p]$ for converges in probability

% Weak convergence: harpoon symbol with optional text on top
% E.g. $\wconverge[n\to\infty]$

% Equality: equals sign with optional text on top
% E.g. $X \equals[d] Y$ for equality in distribution

% Normal distribution: arguments are the mean and variance
% E.g. $\normal{\mu}{\sigma}$

% Uniform distribution: arguments are the left and right endpoints
% E.g. $\unif{0}{1}$

% Independent and identically distributed random variables
% E.g. $ X_1,...,X_n \iid \normal{0}{1}$

% Sequences (this shortcut is mostly to reduce finger strain for small hands)
% E.g. to write $\{A_n\}_{n\geq 1}$, do $\bk{A_n}{n\geq 1}$

% \setcounter{section}{-1}

\setcounter{MaxMatrixCols}{20}

\newcommand{\even}{\mathsf{even}}
\newcommand{\odd}{\mathsf{odd}}

% \sideset{}{'}\prod

\newcommand{\nc}{\mathsf{nc}}

\newcommand{\dd}{\mathrm{d}}

\newcommand{\bF}{\mathbb{F}}

\newcommand{\bN}{\mathbb{N}}

\newcommand{\bQ}{\mathbb{Q}}
\newcommand{\bR}{\mathbb{R}}

\newcommand{\bT}{\mathbb{T}}

\newcommand{\bZ}{\mathbb{Z}}

\newcommand{\cL}{\mathcal{L}}

% Math mode symbols for common sets and spaces. Example usage: $\R$
	% Real numbers
	% Complex numbers
	% Rational numbers
	% Integers
	% Natural numbers
	% Calligraphic F for a sigma algebra
	% Calligraphic L, e.g. for L^p spaces

% Math mode symbols for probability
	% Probability measure
	% Expectation, e.g. $\E(X)$
	% Variance, e.g. $\var(X)$
	% Covariance, e.g. $\cov(X,Y)$
	% Correlation, e.g. $\corr(X,Y)$
	% Borel sigma-algebra

% Other miscellaneous symbols
	% Non-italicized 'th', e.g. $n^\tth$
	% Big-O notation, e.g. $\O(n)$
	% Indicator function, e.g. $\1_A$

% Additional commands for math mode
		% Argmax, e.g. $\argmax_{x\in[0,1]} f(x)$
		% Argmin, e.g. $\argmin_{x\in[0,1]} f(x)$
		% Span, e.g. $\spann\{X_1,...,X_n\}$
		% Bias, e.g. $\bias(\hat\theta)$
			% Range of an operator, e.g. $\ran(T) 
			% Non-italicized 'with respect to', e.g. $\int f(x) \dv x$
		% Diagonal of a matrix, e.g. $\diag(M)$
		% Trace of a matrix, e.g. $\trace(M)$
		% Support of a function, e.g., $\supp(f)$

% Numbered theorem, lemma, etc. settings - e.g., a definition, lemma, and theorem appearing in that 
% order in Lecture 2 will be numbered Definition 2.1, Lemma 2.2, Theorem 2.3. 
% Example usage: \begin{theorem}[Name of theorem] Theorem statement \end{theorem}
\theoremstyle{definition}
\newtheorem{theorem}{Theorem}[section]
\newtheorem{proposition}[theorem]{Proposition}
\newtheorem{lemma}[theorem]{Lemma}
\newtheorem{corollary}[theorem]{Corollary}

% Un-numbered theorem, lemma, etc. settings
% Example usage: \begin{lemma*}[Name of lemma] Lemma statement \end{lemma*}
\newtheorem*{theorem*}{Theorem}
\newtheorem*{proposition*}{Proposition}
\newtheorem*{lemma*}{Lemma}
\newtheorem*{corollary*}{Corollary}
\newtheorem*{definition*}{Definition}
\newtheorem*{example*}{Example}
\newtheorem*{remark*}{Remark}

\theoremstyle{remark}

% --- Left/right header text (to appear on every page) ---

% Do not include a line under header or above footer
\pagestyle{fancy}

% Right header text: Lecture number and title

% \fancyhead[R]{\small\textit{\nouppercase{\rightmark}}}

% Left header text: Short course title, hyperlinked to table of contents
% \fancyhead[L]{\hyperref[sec:contents]{\small Matrix multplication}}

\pgfkeys{/Dynkin diagram,
edge length=1.5cm,
fold radius=1cm,
indefinite edge/.style={
draw=black,
fill=white,
thin,
densely dashed}}

\usepackage{fancyhdr}
\pagestyle{fancy}
\fancyhead[RO]{}
\fancyhead[LO]{}

\newenvironment{red}{\relax\color{red}}{\hspace*{.5ex}\relax}
\newenvironment{blue}{\relax\color{blue}}{\hspace*{.5ex}\relax}
\newcommand{\ber}{\begin{red}}
\newcommand{\er}{\end{red}}
\newcommand{\beb}{\begin{blue}}
\newcommand{\eb}{\end{blue}}

% --- Document starts here ---

\begin{document}

% --- Main title and subtitle ---

\title{Shanks' bias in function fields}

% --- Author and date of last update ---

\author{Seewoo Lee}
% \date{\normalsize\vspace{-1ex} Last updated: \today}
\date{}

% --- Add title and table of contents ---

\maketitle

% --- Abstracts ---

% \tableofcontents\label{sec:contents}
\begin{abstract}
    We study the function field analogue of Shanks' bias. For Liouville function \(\lambda(f)\), we compare the number of monic polynomials $f$ with $\lambda(f) \chi_m(f) = 1$ and $\lambda(f) \chi_m(f) = -1$ for a nontrivial quadratic character \( \chi_m \) modulo a monic square-free polynomial $m$ over a finite field.
    Under the Grand Simplicity Hypothesis (GSH) for \(L\)-functions, we prove that \(\lambda \cdot \chi_m\) is biased towards \(+1\).
    We also give some examples where GSH is violated.
\end{abstract}

% --- Main content: import lectures as subfiles ---
\section{Introduction}
\label{sec:intro}

Chebyshev's bias~\cite{chebyshev1853lettre} is a phenomenon in analytic number theory, referring to the observed preference for certain residue classes of primes in arithmetic progressions.
Specifically, it refers to the observed tendency for there to be more primes of the form \(4n + 3\) than of the form \(4n + 1\) among the primes up to some bound.
For general moduli, it is conjectured that non-quadratic residues are favored over quadratic residues.
Rubinstein and Sarnak were the first to prove this under the Generalized Riemann Hypothesis (GRH) and the Grand Simplicity Hypothesis (GSH) for Dirichlet \(L\)-functions.
Shanks~\cite{shanks1959quadratic} approached the bias from a different angle, examining higher-order effects.
Based on numerical experiments, he conjectured that there are more positive integers \(n\) such that \(\lambda(n) \chi_{-4}(n) = +1\) than those with \(\lambda(n) \chi_{-4}(n) = -1\), where \(\lambda\) is the Liouville function and \(\chi_{-4}\) is the nontrivial quadratic character modulo \(4\) (See Section~\ref{subsec:chebyshev} for details).

Cha~\cite{cha2008chebyshev} studied a function field analogue of Chebyshev's bias, closely following Rubinstein--Sarnak's argument~\cite{rubinstein1994chebyshev}, and proved the existence of bias under similar assumptions for \(L\)-functions.
He also exhibited examples with biases in unexpected directions when \(L\)-functions do not satisfy GSH.
Later works have extended these ideas to Galois extension of function fields~\cite{cha2011chebyshevglobal}, explicit formula for the limiting distributions of biases~\cite{cha2010biases}, multiple prime factors~\cite{devin2021chebyshev}, exceptional biases~\cite{bailleul2024exceptional}, and elliptic curves~\cite{cha2016prime}.

In this paper, we study the function field analogue of Shanks' conjecture.
Let \(p > 2\) be a prime and \(A = \bF_q[T]\) be the polynomial ring over a finite field \(\bF_q\) of size \(q = p^k\).
Let \(m \in A\) be a monic square-free polynomial and \(\chi_m\) be the unique nontrivial quadratic character modulo \(m\).
Writing the Liouville function on \(A\) as \(\lambda : A \to \{\pm 1\}\), we prove that \(\lambda \cdot \chi_m\) is biased towards \(+1\) under GSH.
This gives an answer to a function field analogue of Problem 22 in the Comparative Prime Number Theory Problem List~\cite{hamieh2024comparative}.

\begin{theorem*}[Corollary~\ref{cor:mainlambda} and~\ref{cor:gapLfunc}]
    Let \(A_{\pm}^\lambda(n;m)\) be the number of non-constant monic polynomials \(f \in A\) of degree at most \(n\) with \(\lambda(f) \chi_m(f) = \pm 1\).
    Assume that GSH holds for the Dirichlet \(L\)-function \(L(s, \chi_m)\).
    Then we have
    \[
    \lim_{n \to \infty} \frac{\#\{1 \le k \le n : A_{+}^\lambda(k;m) > A_{-}^\lambda(k;m)\}}{n} > \frac{1}{2}
    \]
    where a closed formula for the density is obtained.
    When \(q \equiv 1 \pmod{4}\), the density can be made arbitrarily close to \(\frac{1}{2}\) by choosing \(m\) appropriately with large enough degree.
\end{theorem*}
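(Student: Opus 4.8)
The plan is to run the Rubinstein--Sarnak programme, in the function-field form due to Cha, on the Dirichlet series of $\lambda\cdot\chi_m$; the new features are the precise shape of that series and an extra pole at $u=-q^{-1/2}$ with no analogue over $\mathbb{Q}$. Writing $u=q^{-s}$ and $\cD(u)=\sum_{f\ \mathrm{monic}}\lambda(f)\chi_m(f)|f|^{-s}$, complete multiplicativity of $\lambda\chi_m$ together with $\tfrac{1}{1+x}=\tfrac{1-x}{1-x^2}$ collapses the Euler product to
\[
\cD(u)=\frac{\zeta_A(2s)\prod_{P\mid m}\bigl(1-|P|^{-2s}\bigr)}{L(s,\chi_m)}=\frac{\prod_{P\mid m}\bigl(1-u^{2\deg P}\bigr)}{(1-qu^2)\,L(u,\chi_m)},
\]
with $L(u,\chi_m)=\prod_j(1-\alpha_j u)$. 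By the Riemann Hypothesis over function fields $|\alpha_j|=\sqrt q$, so every pole of $\cD$ — from $\zeta_A(2s)=(1-qu^2)^{-1}$ at $u=\pm q^{-1/2}$ and from zeros of $L$ at $u=\alpha_j^{-1}$ — lies on $|u|=q^{-1/2}$; under GSH they are simple and distinct, hence $L(\pm q^{-1/2},\chi_m)\neq 0$, which forces every $\alpha_j$ to be non-real and to occur in a conjugate pair $\sqrt q\,e^{\pm i\varphi_j}$, $\varphi_j\in(0,\pi)$, $j=1,\dots,g$ (so $2g=\deg L(u,\chi_m)$). Extracting coefficients by partial fractions gives an exact explicit formula: for all large $n$,
\[
A_+^\lambda(n;m)-A_-^\lambda(n;m)=\sum_{1\le k\le n}c_k=q^{n/2}\Bigl(b_+'+(-1)^n b_-'+\sum_{j=1}^{g}2\,\Re\!\bigl(\beta_j'\,e^{in\varphi_j}\bigr)\Bigr)+O(1),
\]
where $c_k=\sum_{\deg f=k}\lambda(f)\chi_m(f)$ and $b_\pm',\beta_j'$ come from the residues at $u=\pm q^{-1/2}$ and $u=\alpha_j^{-1}$ (summing the geometric series rescales each residue by a bounded nonzero factor and produces the constant $O(1)$). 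Grouping conjugate roots, $L(\pm q^{-1/2},\chi_m)=\prod_j(2\mp2\cos\varphi_j)>0$ under GSH, so both bias coefficients are strictly positive: $b_+'>0$ and $b_-'>0$.

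Next, GSH for $L(s,\chi_m)$ — that $\{\pi,\varphi_1,\dots,\varphi_g\}$ is linearly independent over $\mathbb{Q}$ — gives, via Weyl's criterion, that $(n\varphi_1,\dots,n\varphi_g)$ equidistributes in $(\mathbb{R}/2\pi\mathbb{Z})^g$ both along the even integers and along the odd integers. Dividing the explicit formula by $q^{n/2}$, the empirical distribution of $\bigl(A_+^\lambda(n;m)-A_-^\lambda(n;m)\bigr)/q^{n/2}$ over $1\le n\le N$ therefore converges weakly as $N\to\infty$ to $\mu=\tfrac12\mu_{\mathrm e}+\tfrac12\mu_{\mathrm o}$, where $\mu_{\mathrm e}$ (resp.\ $\mu_{\mathrm o}$) is the law of $b_+'+b_-'+Z$ (resp.\ $b_+'-b_-'+Z$) with $Z=\sum_{j=1}^{g}2|\beta_j'|\cos\Theta_j$ and the $\Theta_j$ i.i.d.\ uniform on $\mathbb{R}/2\pi\mathbb{Z}$ (the arguments of the $\beta_j'$ being absorbed into the $\Theta_j$). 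As $\mu(\{0\})=0$, the density in the theorem equals
\[
\mu\bigl((0,\infty)\bigr)=\tfrac12\,\mathbb{P}\bigl(Z>-b_+'-b_-'\bigr)+\tfrac12\,\mathbb{P}\bigl(Z>-b_+'+b_-'\bigr),
\]
an explicit integral over $(\mathbb{R}/2\pi\mathbb{Z})^{g}$ (equivalently a Bessel-type integral) in the data $q,m$ — the promised closed formula.

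The bias then comes out of a symmetry. $Z$ is symmetric about $0$ (replace $\Theta_j$ by $\Theta_j+\pi$), so reflecting $\mu_{\mathrm e}$ about $b_+'$ reproduces $\mu_{\mathrm o}$; hence $\mu$ is symmetric about $b_+'>0$, so $\mu\bigl((b_+',\infty)\bigr)=\tfrac12$ and $\mu\bigl((0,\infty)\bigr)=\tfrac12+\mu\bigl((0,b_+']\bigr)\ge\tfrac12$, with equality exactly when $\mu$ has no mass on $(0,b_+']$. Since the supports of $\mu_{\mathrm e},\mu_{\mathrm o}$ are the intervals $[\,b_+'\pm b_-'-M,\ b_+'\pm b_-'+M\,]$ with $M=\sum_j 2|\beta_j'|$, this happens exactly when $M\le b_-'-b_+'$; so everything comes down to the inequality $M>b_-'-b_+'$ (automatic if $b_+'\ge b_-'$, and for $g=0$, i.e.\ $\deg m=1$, one has $b_+'>b_-'$ directly and the density is $1$). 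This inequality is the heart of the matter, and the step I expect to be the main obstacle. I would prove it by estimating the residue $\beta_{j_0}=N(\alpha_{j_0}^{-1})\big/\bigl[(1-q\alpha_{j_0}^{-2})\prod_{l\neq j_0}(1-\alpha_l/\alpha_{j_0})\bigr]$ (where $N(u)=\prod_{P\mid m}(1-u^{2\deg P})$) at the inverse zero $\alpha_{j_0}=\sqrt q\,e^{i\varphi_{j_0}}$ with $\cos\varphi_{j_0}$ smallest. Using $|1-q\alpha_{j_0}^{-2}|=2|\sin\varphi_{j_0}|$, $\bigl|\prod_{l\neq j_0}(1-\alpha_l/\alpha_{j_0})\bigr|=2|\sin\varphi_{j_0}|\prod_{j\neq j_0}2(\cos\varphi_j-\cos\varphi_{j_0})$, $L(-q^{-1/2},\chi_m)=\prod_j(2+2\cos\varphi_j)$, and $|N(\alpha_{j_0}^{-1})|\ge\prod_{P\mid m}(1-|P|^{-1})$ one obtains
\[
\frac{2|\beta_{j_0}|}{b_-}=\frac{|N(\alpha_{j_0}^{-1})|}{\prod_{P\mid m}(1-|P|^{-1})}\cdot\frac{2}{1-\cos\varphi_{j_0}}\prod_{j\neq j_0}\frac{1+\cos\varphi_j}{\cos\varphi_j-\cos\varphi_{j_0}}\ >\ 1,
\]
since $\cos\varphi_{j_0}>-1$ makes every factor exceed $1$ and $\cos\varphi_j>\cos\varphi_{j_0}$ for $j\neq j_0$ (distinctness under GSH) keeps every denominator positive. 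Passing to the summed constants, $M\ge 2|\beta_{j_0}'|=\tfrac{\sqrt q}{|\sqrt q e^{i\varphi_{j_0}}-1|}\,2|\beta_{j_0}|\ge\tfrac{\sqrt q}{\sqrt q+1}\,2|\beta_{j_0}|>\tfrac{\sqrt q}{\sqrt q+1}\,b_-=b_-'\ge b_-'-b_+'$, which is $M>b_-'-b_+'$; hence $\mu\bigl((0,\infty)\bigr)>\tfrac12$.

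For the last assertion, the substitution $u\mapsto-u$ twists $\chi_m$ by $f\mapsto(-1)^{\deg f}$, so an inverse zero of $L(u,\chi_m)$ near $-\sqrt q$ is the same as a small but nonzero central value of that twist; when $q\equiv1\pmod 4$ (so $-1$ is a square and the relevant root number is $+1$, hence the central value need not vanish) one can exhibit, for arbitrarily large $\deg m$, squarefree $m$ satisfying GSH for which $L(s,\chi_m)$ has an inverse zero $\sqrt q\,e^{i\varphi_{j_0}}$ with $\varphi_{j_0}\to\pi$. The same computations then show $b_-'\to\infty$ while $b_+'$ and the other $|\beta_j'|$ stay bounded with $2|\beta_{j_0}'|\sim 2b_-'$, so the interval $(b_-'-b_+',\,b_+'+b_-')$ has bounded length $2b_+'$ and sits where $Z$ has density $\asymp 1/b_-'$; hence $\mathbb{P}\bigl(Z\in(b_-'-b_+',\,b_+'+b_-')\bigr)\to 0$ and $\mu\bigl((0,\infty)\bigr)=\tfrac12+\tfrac12\,\mathbb{P}(\cdots)\to\tfrac12^{+}$. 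This also explains why $M>b_-'-b_+'$ is the real difficulty: the only way to drive the odd-$n$ bias strongly negative is to push a zero of $L(s,\chi_m)$ toward $-\sqrt q$, and that same zero enlarges the oscillation amplitude $M$ by at least as much — enough to keep the density above $\tfrac12$, but allowing it to be forced arbitrarily close.
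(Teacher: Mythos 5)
Your treatment of the main inequality is correct and is essentially the paper's route: your identity for $\cD(u)$ is Corollary 3.2, your ``parity constant plus oscillation'' explicit formula for the cumulative bias is Theorem 4.3 (which the paper proves by an inductive trigonometric computation; your partial-fraction extraction is an equivalent and arguably cleaner way to get it), and the passage to the density through Kronecker--Weyl/Portmanteau, with the two parity constants identified as $N(\pm q^{-1/2})/(2\cL(\pm q^{-1/2},\chi_m))>0$, is Corollary 4.4 combined with Proposition 4.5 (your $b_+'\pm b_-'$ are the paper's $C_m$-multiples of $e_{\even},e_{\odd}$). Where you genuinely add something is strictness: you correctly observe that positivity of the excess requires the interval to meet the support of the oscillation, reduce this to $M>b_-'-b_+'$, and prove it by the residue bound $2|\beta_{j_0}|>b_-$ at the zero of smallest $\cos\varphi$; I checked that computation and it is right. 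The paper's Corollary 4.4 asserts strict inequality from nonemptiness of the interval alone (via $e_{\even}>0$ and $e_{\even}+e_{\odd}>0$), so your lemma supplies a point the paper does not spell out. Two small repairs: for $\deg m$ even, $\cL(u,\chi_m)$ has the real zero $u=1$, so ``RH plus GSH force all inverse zeros to be non-real of modulus $\sqrt q$ with $2g=\deg \cL$'' is false as written; that zero is cancelled by $N(u)$ in $\cD(u)$, and in your ratio it contributes only an extra factor $(1+q^{-1/2})/\lvert 1-q^{-1/2}e^{-i\varphi_{j_0}}\rvert\ge 1$, so the argument extends, but you must say this. Also $g=0$ occurs for $\deg m=2$ as well as $\deg m=1$.

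The genuine gap is the last assertion (the $q\equiv 1\pmod 4$ statement). Your mechanism requires, for arbitrarily large degree, squarefree $m$ \emph{satisfying GSH} whose $L$-function has an inverse zero with angle $\varphi_{j_0}\to\pi$, together with the quantitative claims that $b_+'$ and the remaining amplitudes stay bounded and that $2\lvert\beta_{j_0}'\rvert\sim 2b_-'$. None of this is proved or referenced, and it is not an off-the-shelf fact: forcing a zero toward $-q^{-1/2}$ means producing abnormally small (but nonzero) values of the twisted central value while simultaneously certifying GSH for the same modulus, and the root-number remark (``$q\equiv1\pmod4$ so the central value need not vanish'') gives neither smallness, nor nonvanishing, nor GSH. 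The paper's argument is different and rests on an actual theorem: by Proposition 4.5 the length of the interval in the density formula is proportional to $\prod_i(1-q^{-M_i})/\cL(q^{-1/2},\chi_m)$, and the Andrade--Keating first-moment asymptotic (this is exactly where $q\equiv1\pmod4$ enters, as a hypothesis of their theorem) produces $m$ of arbitrarily large degree with large central value, hence an arbitrarily short interval. If you keep your framework, the natural fix is to shrink the interval via large $\cL(q^{-1/2},\chi_m)$ as in the paper and then use an anti-concentration bound for $Z$ (for which your lower bound on the largest amplitude is the useful input), rather than trying to move a zero toward $-q^{-1/2}$; as it stands, the existence statement your argument hinges on is unsupported.
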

We also study several variants (e.g.\ non-cumulative densities and replacing \(\lambda\) by the M\"obius function \(\mu\)). In particular, we show there is no bias when restricting to square-free polynomials (Theorem~\ref{thm:mainmu}).
Example calculations of the densities are given in Section~\ref{sec:examples}, where the accompanying SageMath code can be found in the GitHub repository \href{https://github.com/seewoo5/sage-function-field}{github.com/seewoo5/sage-function-field}.

To prove our main theorem, we first exhibit \(L\)-functions attached to these biases (Proposition~\ref{prop:biasL}). We then apply the Kronecker--Weyl equidistribution theorem and Portmanteau's theorem to derive explicit density formulas via elementary trigonometric-series calculations. In particular, one expresses the bias as a linear combination of cosines with \(\bQ\)-linearly independent periods; equidistribution reduces the question to evaluating a constant term, and the final inequality follows from comparing symmetric polynomials in those cosines (Theorem~\ref{thm:blambdabias}).

\subsection*{Acknowledgments}

We thank Alexandre Bailleul, Byungchul Cha, Peter Humphries, and Greg Martin for helpful discussions.
We also thank the referee for careful reading and valuable comments.
This work is partially supported by NSF RTG grant DMS-2342225.

\section{Preliminaries}
\label{sec:preliminaries}

\subsection{Chebyshev's bias and its variants}
\label{subsec:chebyshev}

Since Chebyshev first observed this bias in 1853~\cite{chebyshev1853lettre}, it has been widely studied by several authors.
Littlewood~\cite{littlewood1914distribution} proved that both \(\pi_+(n) > \pi_-(n)\) and \(\pi_+(n) < \pi_-(n)\) happens infinitely often, where \(\pi_{\pm}(n)\) is the number of primes \(p \le n\) such that \(p \equiv \pm 1 \pmod 4\).
Rubinstein and Sarnak~\cite{rubinstein1994chebyshev} gave more quantitative results.
Assuming the Generalized Riemann Hypothesis (GRH) and the Grand Simplicity Hypothesis (GSH), they proved that the \emph{logarithmic density} of the set of \(n\) such that \(\pi_-(n) > \pi_+(n)\) is about \(0.9959\), which is very close to \(1\) but not exactly.
In fact, they considered general ``prime race'' scenarios with general modulus where more than two congruence classes may exist, and computed the limiting distribution of the normalized bias.

Shanks~\cite{shanks1959quadratic} studied various aspects of Chebyshev's bias.
He considered the normalized bias
\[
\tau(n) = \frac{\pi_-(n) - \pi_+(n)}{\pi(n)} \sqrt{n}
\]
and conjectured that
\[
\lim_{N \to \infty} \frac{1}{N-1} \sum_{n=1}^{N} \tau(n) = 1
\]
supported by numerical evidence.
In particular, he considered the higher-order effects and conjectured that, if \(\pi_{\pm}^{(a)}(n)\) counts the number of positive integers of the form \(4k \pm 1\) that are product of \(a\) primes (not necessarily distinct), then the average of the order \(a\) normalized bias
\[
\tau^{(a)}(n) = \frac{\pi_-^{(a)}(n) - \pi_+^{(a)}(n)}{\pi_-^{(a)}(n) + \pi_+^{(a)}(n)} \sqrt{n}
\]
is \((-1)^{a}\)~\cite[Section 9]{shanks1959quadratic}. In other words, there are more numbers of the form \(4k-1\) (resp. \(4k + 1\)) with an odd (resp. even) number of prime factors than with an even (resp. odd) number of prime factors.
Specifically, he claimed that the number of \(n\) with \(\lambda(n) \chi_{-4}(n) = +1\) exceeds that of \(\lambda(n) \chi_{-4}(n) = -1\), where \(\lambda\) is the Liouville function
\[
\lambda(n) = (-1)^{e_1 + e_2 + \cdots + e_k} \,\,\text{for}\,\, n = p_1^{e_1} p_2^{e_2} \cdots p_k^{e_k}, \quad p_i's \text{ are distinct prime factors}
\]
and \(\chi_{-4} = \left(\frac{\cdot}{-4}\right)\) is the nontrivial quadratic Dirichlet character modulo \(4\). Note that the \(L\)-function for \(\lambda(n) \chi_{-4}(n)\) is simply given by 
\begin{equation}
    \label{eqn:LfuncShanks}
    \sum_{n \ge 1} \frac{\lambda(n) \chi_{-4}(n)}{n^{s}} = \frac{(1 - 2^{-2s}) \zeta(s)}{L(s, \chi_{-4})}
\end{equation}
and the bias might be related to the analytic properties of this \(L\)-function~\eqref{eqn:LfuncShanks}.
The Problem 22 in the comparative prime number theory problem list~\cite{hamieh2024comparative} asks to compute logarithmic density of the corresponding set (or replacing \(\chi_{-4}\) with other quadratic Dirichlet characters), assuming GRH and GSH for the relevant \(L\)-functions.

\subsection{Function fields, \texorpdfstring{\(L\)}{L}-functions, and Chebyshev's bias}

We fix an odd prime $p$.
Let $A = \bF_{q}[T]$ be the polynomial ring over a finite field $\bF_q$ of size $q = p^k$.
Let $A^\circ \subset A$ be the subset of monic polynomials.
For a monic square-free polynomial $m \in A^\circ$, let \( \chi_m(f) := \left(\frac{f}{m} \right) \) be the unique nontrivial quadratic character modulo \(m\).
M\"obius function \(\mu\) and Liouville function \(\lambda\) are defined on $A$ as
\begin{align}
    \mu(f) &:= \begin{cases}
        (-1)^{k} & f = g_1 \cdots g_k, \text{ $f$ is square-free and $g_1,\dots,g_k$ are distinct irreducible factors} \\
        0 & f \text{ is not square-free}
    \end{cases} \\
    \lambda(f) &:= (-1)^{e_1 + \cdots + e_k}, \,\text{if } f \text{ factors as } g_1^{e_1} \cdots g_k^{e_k}.
\end{align}
Especially, they agree on square-free polynomials.

The Riemann zeta function for \(A\) is defined as
\[
\zeta(s) = \sum_{f \in A^\circ} \frac{1}{|f|^{s}} = \frac{1}{1 - q^{1-s}}
\]
where \(|f| := q^{\deg f}\) for \(f \in A\).
The series converges for \(\Re(s) > 1\) and admits meromorphic continuation to the whole complex plane with a simple pole at \(s = 1\).
If \(\chi\) is a Dirichlet character on \(A\) with modulus \(m \in A\), then the Dirichlet \(L\)-function \(L(s, \chi)\) is similarly defined as
\[
L(s, \chi) = \sum_{f \in A^\circ} \frac{\chi(f)}{|f|^{s}}
\]
which also converges for \(\Re(s) > 1\) and admits meromorphic continuation to the whole complex plane.
We often write \(L\)-functions as a function in \(u = q^{-s}\) and write as \(\cL(u, \chi)\).
Both functions admit Euler product over monic irreducible polynomials as
\begin{align*}
    \zeta(s) &= \prod_{P} (1 - |P|^{-s})^{-1}, \\
    L(s, \chi) &= \prod_{P} (1 - \chi(P)|P|^{-s})^{-1}.
\end{align*}
When \(\chi\) is a nontrivial character, it is known that the Dirichlet \(L\)-function becomes a polynomial in \(u\) of degree at most \(\deg m - 1\)~\cite[Proposition 4.3]{rosen2013number}.
By the Riemann Hypothesis for function fields, the \emph{inverse of} zeros of \(\cL(u, \chi)\) are either \(u = 1\) (if this happens, it is always a simple zero) or have norm \(q^{\frac{1}{2}}\).
For quadratic characters, we have:
\begin{theorem}[{\cite[Proposition 6.4]{cha2008chebyshev}}]
    \label{thm:Lfuncquad}
    Let \(\chi\) be a quadratic Dirichlet character of modulus \(m\) and let \(M = \deg m\).
    \begin{enumerate}
        \item If \(M\) is odd, then \(\cL(u, \chi) \in \bZ[u]\) is a polynomial of degree \(M - 1\) where all the inverses of zeros have norm \(q^{\frac{1}{2}}\) and can be written as
        \[
            \cL(u, \chi) = \prod_{j=1}^{M'} (1 - \sqrt{q} e^{i\theta_j} u)(1 - \sqrt{q} e^{-i\theta_j} u) = \prod_{j=1}^{M'} (1 - 2 \sqrt{q} \cos \theta_j u + q u^2)
        \]
        for \(M' = (M-1)/2\) and \(\theta_1, \dots, \theta_{M'} \in \bR\).
        \item If \(M\) is even, then \(\cL(u, \chi) \in \bZ[u]\) is a polynomial of degree \(M - 1\) where it has a simple zero at \(u = 1\) and all the other inverse zeros have norm \(q^{\frac{1}{2}}\).
        It can be written as
        \[
        \cL(u, \chi) = (1 - u) \prod_{j=1}^{M'} (1 - \sqrt{q} e^{i\theta_j} u)(1 - \sqrt{q} e^{-i\theta_j} u) = (1 - u) \prod_{j=1}^{M'} (1 - 2 \sqrt{q} \cos \theta_j u + q u^2)
        \]
        for \(M' = (M-2) / 2\) and \(\theta_1, \dots, \theta_{M'} \in \bR\).
    \end{enumerate}
\end{theorem}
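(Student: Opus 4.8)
Here is how I would prove Theorem~\ref{thm:Lfuncquad}.

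The plan is to realize $\cL(u,\chi)$ as (essentially) the numerator of the zeta function of a hyperelliptic curve and then feed in the Riemann Hypothesis for function fields, which is already available: it is recorded in the paragraph just before the theorem. Write $K = \bF_q(T)$, let $m$ denote the (squarefree) conductor of the primitive quadratic character $\chi$ and $M = \deg m$, and let $C$ be the smooth projective curve over $\bF_q$ with function field $L = K(\sqrt D)$, where $D = m$ if $q \equiv 1 \pmod 4$ or $M$ is even, and $D = c\,m$ for a fixed nonsquare $c \in \bF_q^\times$ if $q \equiv 3 \pmod 4$ and $M$ is odd; the twist by $c$ is only there to absorb the quadratic-reciprocity factor, so that a monic prime $P \nmid m$ splits, stays inert, or ramifies in $L/K$ according as $\chi(P) = 1, -1, 0$. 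Two preliminary remarks: since $\chi$ takes values in $\{0, \pm 1\}$, each coefficient of $\cL(u,\chi) = \sum_{n \ge 0}\bigl(\sum_{f\ \mathrm{monic},\ \deg f = n}\chi(f)\bigr)u^n$ is an integer, so $\cL(u,\chi) \in \bZ[u]$; and by the cited Proposition~4.3 of~\cite{rosen2013number}, $\cL(u,\chi)$ is a polynomial of degree at most $M-1$ with constant term $\chi(1) = 1$ (for $n \ge M$ one groups monic $f$ of degree $n$ by residue modulo $m$ and gets $\sum_{\deg f = n}\chi(f) = q^{n-M}\sum_{r \bmod m}\chi(r) = 0$).

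The core step is a place-by-place comparison of the Euler factors of $\zeta_C(s)$ with those of $\zeta(s)L(s,\chi)$. For a monic prime $P \nmid m$ the local factor of $\zeta_C$ is $(1-|P|^{-s})^{-2}$ or $(1-|P|^{-2s})^{-1}$ according as $P$ splits or stays inert in $L$, i.e.\ according as $\chi(P) = 1$ or $-1$; for $P \mid m$ it is $(1-|P|^{-s})^{-1}$, as $D$ is squarefree; in every case it equals $(1-|P|^{-s})^{-1}(1-\chi(P)|P|^{-s})^{-1}$. Thus the product of the finite-place factors of $\zeta_C$ is $\zeta(s)L(s,\chi) = (1-qu)^{-1}\cL(u,\chi)$, and $\zeta_C(s)$ differs from this only by the Euler factor over the place(s) of $L$ above $\infty$. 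The parity of $M$ controls that factor: $v_\infty(D) = -M$, so if $M$ is odd the valuation is odd and $\infty$ ramifies to a single degree-$1$ place (factor $(1-u)^{-1}$); if $M$ is even, then $D = m$ is monic, so $T^M$ and the unit part ($\equiv 1$, hence a square by Hensel) are both squares in $\bF_q((1/T))$, whence $\sqrt D \in \bF_q((1/T))$ and $\infty$ splits into two degree-$1$ places (factor $(1-u)^{-2}$). Comparing with $\zeta_C(s) = P_C(u)/\bigl((1-u)(1-qu)\bigr)$, where $P_C \in \bZ[u]$, $P_C(0) = 1$, $\deg P_C = 2 g_C$, one gets $\cL(u,\chi) = P_C(u)$ for $M$ odd and $\cL(u,\chi) = (1-u)P_C(u)$ for $M$ even. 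Since the genus of $y^2 = D(T)$ is $(M-1)/2$ when $M$ is odd and $(M-2)/2$ when $M$ is even, in both cases $\cL(u,\chi)$ has degree exactly $M-1$, as claimed.

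Finally, invoke the Riemann Hypothesis: the reciprocal roots $\alpha_i$ (those with $P_C(u) = \prod_i(1-\alpha_i u)$) all satisfy $|\alpha_i| = \sqrt q$. In particular $P_C(1) = \prod_i(1-\alpha_i) \ne 0$ because $\sqrt q > 1$, so in the even case the zero of $\cL(u,\chi)$ at $u = 1$ contributed by the explicit factor $1-u$ is simple. Since $P_C \in \bR[u]$, the $\alpha_i$ fall into complex-conjugate pairs, which we write $\alpha_j = \sqrt q\,e^{i\theta_j}$, $\bar\alpha_j = \sqrt q\,e^{-i\theta_j}$; the only real values available, $\pm\sqrt q$, occur with even multiplicity, because the functional equation forces $\prod_i \alpha_i = q^{g_C} > 0$ (so $-\sqrt q$ appears an even number of times, and then $+\sqrt q$ does too, $2g_C$ being even), so these too are covered by the pairing with $\theta_j \in \{0,\pi\}$. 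Gathering the $g_C$ pairs and expanding $(1-\sqrt q\,e^{i\theta_j}u)(1-\sqrt q\,e^{-i\theta_j}u) = 1 - 2\sqrt q\cos\theta_j\,u + q u^2$ gives exactly the two asserted factorizations, with $M' = g_C$ equal to $(M-1)/2$ or $(M-2)/2$.

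I expect the main obstacle to be the bookkeeping at infinity: reconciling the ``incomplete'' $L$-function $\cL(u,\chi) = \sum_{f\ \mathrm{monic}}\chi(f)u^{\deg f}$, which carries no Euler factor at $\infty$, with the full zeta function of the curve $C$, and correctly deciding whether $\infty$ splits, stays inert, or ramifies in $L/K$ --- together with the reciprocity twist that determines the right curve to begin with. The remaining ingredients are either quoted (Rosen's polynomiality, Weil's Riemann Hypothesis for curves) or routine. One can cross-check the even case without curves: writing each nonzero residue $r \bmod m$ uniquely as $cg$ with $c \in \bF_q^\times$ and $g$ monic of degree $< M$, orthogonality gives $0 = \sum_{r \bmod m}\chi(r) = \bigl(\sum_{c \in \bF_q^\times}\chi(c)\bigr)\cL(1,\chi)$, and a short quadratic-symbol computation shows $\sum_{c \in \bF_q^\times}\chi(c) = q-1$ precisely when $M$ is even, forcing $\cL(1,\chi) = 0$ there.
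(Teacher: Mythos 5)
The paper does not prove this statement itself—it quotes it from Cha's Proposition 6.4—and your argument is, in essence, the standard proof behind that citation: identify \(\cL(u,\chi)\) with the numerator \(P_C(u)\) of the zeta function of the hyperelliptic curve \(y^2 = D(T)\) (with the constant twist \(D = cm\) correctly inserted to absorb the reciprocity sign when \(q \equiv 3 \pmod 4\) and \(M\) is odd), read off the extra \((1-u)\) factor from the split/ramified behavior of the infinite place, and invoke Weil's Riemann Hypothesis plus the functional equation to get \(|\alpha_i| = \sqrt{q}\) and the pairing into factors \(1 - 2\sqrt{q}\cos\theta_j\,u + qu^2\). Your details check out (the splitting criterion via \(\left(\frac{D}{P}\right) = \chi(P)\), the genus bookkeeping giving degree exactly \(M-1\), the simplicity of the zero at \(u=1\), and the even-multiplicity argument for the real inverse roots), so the proposal is correct and matches the cited source's approach.
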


In~\cite{cha2008chebyshev}, Cha proved a version of Chebyshev's bias in function fields of odd characteristic, by following the proof of Rubinstein--Sarnak~\cite{rubinstein1994chebyshev} in the number field case.
As in \emph{loc. cit.}, he defined a function field version of the Grand Simplicity Hypothesis (GSH), which is on \(\bQ\)-linear independence of arguments of inverse zeros of \(L\)-functions.
Under GSH, he showed that quadratic non-residues are preferred over quadratic residues modulo irreducible polynomials.
One noticeable difference from the number field case is that, in the function field case, there are examples where GSH does not hold.
In such case, the bias could happen in both expected and unexpected directions~\cite[Section 5]{cha2008chebyshev}.

As mentioned previously, there are several variants of Chebyshev's bias over function fields \cite{cha2010biases,cha2011chebyshevglobal,cha2016prime,devin2021chebyshev,bailleul2024exceptional}.
Most notably, Devin and Meng~\cite{devin2021chebyshev} studied the bias for the polynomials with a fixed number of irreducible factors.
For each integer \(k \ge 1\) and a square-free monic polynomial \(m\), they proved that the monic polynomials with \(k\) irreducible factors tend to be quadratic non-residues (resp. quadratic residues) modulo \(m\) when \(k\) is odd (resp. \(k\) is even), assuming GSH for all Dirichlet \(L\)-functions of quadratic characters modulo \(m\).
In particular, it recovers Cha's result~\cite{cha2008chebyshev} when \(k = 1\).
Shanks' bias \cite{shanks1959quadratic} may be viewed as a case where we consider all \(k \ge 1\) at once.
However, the asymptotic formula for the normalized bias \cite[Theorem 1.2]{devin2021chebyshev} is only shown for \(k = o((\log N)^{1/2})\), where \(N\) is the maximal degree of polynomials being considered, which may not be enough to deduce Shanks' bias since the typical number of irreducible factors of polynomials of degree \(N\) is about \(\log N\) (See also Remark 2 or \emph{loc. cit.}).
As we will see in Section \ref{sec:Lfunc} (Proposition \ref{prop:biasL}), the \(L\)-function associated to the Shanks' bias is simple and this allows us to compute the associated densities of degrees (Corollary \ref{cor:mainlambda}).
Also, the case of the bias with M\"obius function (i.e. restrict to the square-free polynomials) has not been exploited in \emph{loc. cit}.

% Note that the integer case was first studied by Ford and Sneed~\cite{ford2010chebyshev} for the numbers with two prime factors and later generalized by Meng~\cite{meng2018chebyshev,meng2018large} to arbitrary number of prime factors.

\subsection{Kronecker--Weyl Equidistribution theorem}

The Kronecker--Weyl theorem is a following multidimensional generalization of the Weyl's equidistribution theorem.

\begin{theorem}[Kronecker--Weyl]
\label{thm:Kronecker-Weyl}
    Let \(g : \bT^d \to \bR\) be a continuous function on the \(d\)-dimensional torus \(\bT^d = (\bR / 2 \pi \bZ)^d\).
    Let \(\theta_1, \dots, \theta_d \in \bR\) be the real numbers such that \(\pi, \theta_1, \dots, \theta_d\) are \(\bQ\)-linearly independent real numbers.
    Then \(\{(n \theta_1, \dots, n \theta_d) : n \in \bN\}\in \bT^d\) is equidistributed and
    \begin{equation}
        \label{eqn:KWlim}
        \lim_{N \to \infty} \frac{1}{N} \sum_{n=1}^{N} g(n \theta_1, \dots, n \theta_d) = \int_{\bT^d} g(\phi) \dd \phi
    \end{equation}
    where \(\dd \phi\) is the normalized Haar measure on \(\bT^d\) with \(\int_{\bT^d} \dd \phi = 1\).
\end{theorem}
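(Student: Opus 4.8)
The plan is to deduce the theorem from Weyl's criterion, reducing the whole statement to the evaluation of one-dimensional geometric exponential sums; the $\bQ$-linear independence hypothesis — with $\pi$ deliberately thrown into the list — is exactly what forces those sums to decay. First I would check \eqref{eqn:KWlim} for the exponential monomials $g_{\b k}(\phi) = e^{i\langle \b k,\phi\rangle}$, $\b k\in\bZ^d$. For $\b k = \b 0$ both sides equal $1$. For $\b k\neq\b 0$ set $\alpha := k_1\theta_1+\cdots+k_d\theta_d$, so that $g_{\b k}(n\theta_1,\dots,n\theta_d)=e^{in\alpha}$, and provided $e^{i\alpha}\neq 1$ the geometric sum gives
\[
\frac1N\sum_{n=1}^N e^{in\alpha} = \frac{e^{i\alpha}}{N}\cdot\frac{e^{iN\alpha}-1}{e^{i\alpha}-1} = O_\alpha\!\left(\tfrac1N\right)\longrightarrow 0 = \int_{\bT^d} g_{\b k}\,\dd\phi.
\]
The one thing to observe is that $e^{i\alpha}\neq 1$, i.e.\ $\alpha\notin 2\pi\bZ$, whenever $\b k\neq\b 0$: an identity $\alpha = 2\pi m$ with $m\in\bZ$ is a nontrivial rational relation $(-2m)\pi + k_1\theta_1+\cdots+k_d\theta_d = 0$ among $\pi,\theta_1,\dots,\theta_d$, which by hypothesis forces $m=k_1=\cdots=k_d=0$. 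This is the only place the hypothesis is used, and the reason $\pi$ appears in it.

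By linearity, \eqref{eqn:KWlim} then holds for every trigonometric polynomial. For an arbitrary continuous $g\colon\bT^d\to\bR$ and any $\varepsilon>0$, I would approximate $g$ uniformly by a trigonometric polynomial $P$ with $\|g-P\|_\infty<\varepsilon$ (Fej\'er summation, or Stone--Weierstrass) and use the triangle-inequality estimate
\[
\left|\frac1N\sum_{n=1}^N g(n\theta_1,\dots,n\theta_d)-\int_{\bT^d}g\,\dd\phi\right|\le 2\|g-P\|_\infty + \left|\frac1N\sum_{n=1}^N P(n\theta_1,\dots,n\theta_d)-\int_{\bT^d}P\,\dd\phi\right|,
\]
valid for all $N$. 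Letting $N\to\infty$, the last term vanishes by the previous step, so $\limsup_N$ of the left-hand side is $\le 2\varepsilon$; since $\varepsilon$ was arbitrary, \eqref{eqn:KWlim} holds for all continuous $g$.

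Finally, equidistribution of $\{(n\theta_1,\dots,n\theta_d):n\in\bN\}$ in $\bT^d$ — the assertion that $\frac1N\#\{1\le n\le N:(n\theta_1,\dots,n\theta_d)\in B\}\to\mathrm{vol}(B)$ for every box $B$ — follows from \eqref{eqn:KWlim} by the usual sandwiching: the boundary $\partial B$ has Haar measure zero, so $\dso_B$ can be squeezed from above and below by continuous functions differing from it only on an open neighborhood of $\partial B$ of measure $<\varepsilon$; applying \eqref{eqn:KWlim} to each and letting $\varepsilon\downarrow 0$ gives the claim. I do not expect any real obstacle here: the entire content of the theorem is the geometric-sum computation together with the observation $\alpha\notin 2\pi\bZ$ for $\b k\neq\b 0$, which is immediate from the $\bQ$-linear independence of $\{\pi,\theta_1,\dots,\theta_d\}$; the passages from characters to continuous functions and from continuous functions to boxes are entirely standard.
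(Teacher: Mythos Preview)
Your argument is correct and is exactly the standard Weyl-criterion proof: verify \eqref{eqn:KWlim} on the characters $e^{i\langle \b k,\phi\rangle}$ via the geometric sum (using that $\b k\neq \b 0$ forces $k_1\theta_1+\cdots+k_d\theta_d\notin 2\pi\bZ$ by the $\bQ$-linear independence of $\pi,\theta_1,\dots,\theta_d$), pass to continuous $g$ by Stone--Weierstrass, and deduce equidistribution on boxes by the usual sandwich. The paper does not give its own proof of Theorem~\ref{thm:Kronecker-Weyl} but merely refers the reader to the appendix of Humphries' thesis, so there is nothing to compare against; your write-up is a perfectly good self-contained proof.
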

A proof can be found in the Appendix of~\cite{humphries2012mertens}.
In such case, the counting measure
\[
\dd\phi_n(S) := \frac{1}{n}\# \{1 \le k \le n : (k \theta_1, \dots, k \theta_d) \in S\}
\]
weakly converges to \(\dd\phi\), so by Portmanteau's theorem~\cite[Theorem 2.1]{billingsley2013convergence}, we have
\[
\lim_{n \to \infty} \dd\phi_n(S) = \dd\phi(S)
\]
for any Borel set \(S \subset \bT^d\) with \(\dd\phi(\partial S) = 0\).
From this, we can deduce the following corollary, which will be used later to compute various densities.
\begin{corollary}
    \label{cor:sindensity}    
    Let \(\theta_1, \dots, \theta_d \in \bR\) be real numbers where \(\{\pi, \theta_1, \theta_2, \dots, \theta_d\}\) are \(\bQ\)-linearly independent.
    Let \(\alpha, \beta_1, \dots, \beta_d, \omega_1, \dots, \omega_d \in \bR\).
    For any sequence \(x_n \to 0\), we have 
    \begin{equation}
        \label{eqn:sindensity}
        \lim_{n \to \infty} \frac{1}{n} \#\left\{1 \le k \le n : \alpha + \sum_{j=1}^{d} \beta_j \sin(n\theta_j + \omega_j) > x_n\right\} = \dd \phi\left(\left\{\alpha + \sum_{j=1}^{d} \beta_j \sin(y_j) > 0\right\}\right)
    \end{equation}
\end{corollary}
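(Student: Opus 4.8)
The plan is to recognize the set on the left of~\eqref{eqn:sindensity} as (up to a vanishing perturbation) the Kronecker--Weyl orbit counted inside a fixed open subset of \(\bT^d\), and then to identify the Haar measure of that subset with the right-hand side. Concretely, I would set \(g\colon \bT^d \to \bR\), \(g(\phi_1,\dots,\phi_d) = \alpha + \sum_{j=1}^{d}\beta_j\sin(\phi_j+\omega_j)\), which is continuous, and for \(c\in\bR\) write \(S_c = \{\phi\in\bT^d : g(\phi) > c\}\), an open set with \(\partial S_c \subseteq \{g = c\}\). Since \(\{\pi,\theta_1,\dots,\theta_d\}\) is \(\bQ\)-linearly independent, Theorem~\ref{thm:Kronecker-Weyl} together with the weak-convergence discussion following it shows that the empirical measures \(\dd\phi_n\) of the points \((k\theta_1,\dots,k\theta_d)\), \(1\le k\le n\), converge weakly to \(\dd\phi\); by Portmanteau's theorem \(\dd\phi_n(S)\to\dd\phi(S)\) for every Borel \(S\) with \(\dd\phi(\partial S)=0\).

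First I would prove that every level set of \(g\) is Haar-null, i.e.\ \(\dd\phi(\{g=c\})=0\) for all \(c\in\bR\). One may assume that not all \(\beta_j\) vanish, the contrary case \(g\equiv\alpha\) being immediate from \(x_n\to0\); say \(\beta_1\neq0\). Freezing \((\phi_2,\dots,\phi_d)\), the function \(\phi_1\mapsto g(\phi_1,\dots,\phi_d)-c\) equals \(\beta_1\sin(\phi_1+\omega_1)\) plus a constant, so it vanishes for at most two values of \(\phi_1\in\bT\); Fubini's theorem then gives \(\dd\phi(\{g=c\})=0\). In particular every \(S_c\) is a \(\dd\phi\)-continuity set, so \(\dd\phi_n(S_c)\to\dd\phi(S_c)\).

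Next I would squeeze out the perturbation. Given \(\epsilon>0\), pick \(N\) with \(|x_n|\le\epsilon\) for \(n\ge N\); then \(g(\phi)>\epsilon \Rightarrow g(\phi)>x_n \Rightarrow g(\phi)>-\epsilon\), so for \(n\ge N\)
\[
\dd\phi_n(S_\epsilon)\;\le\;\frac1n\#\Big\{1\le k\le n:\alpha+\sum_{j=1}^{d}\beta_j\sin(k\theta_j+\omega_j)>x_n\Big\}\;\le\;\dd\phi_n(S_{-\epsilon}).
\]
Letting \(n\to\infty\) traps the \(\liminf\) and \(\limsup\) of the middle quantity between \(\dd\phi(S_\epsilon)\) and \(\dd\phi(S_{-\epsilon})\). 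Then I would let \(\epsilon\downarrow0\): since \(S_\epsilon\uparrow\{g>0\}\) and \(S_{-\epsilon}\downarrow\{g\ge0\}\), continuity of the measure together with \(\dd\phi(\{g=0\})=0\) forces both bounds to \(\dd\phi(\{g>0\})\), so the middle quantity converges to \(\dd\phi(\{g>0\})\).

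Finally I would rewrite \(\dd\phi(\{g>0\})\) in the desired form: the translation \((\phi_1,\dots,\phi_d)\mapsto(\phi_1+\omega_1,\dots,\phi_d+\omega_d)\) of \(\bT^d\) preserves \(\dd\phi\) and carries \(\{g>0\}\) onto \(\{(y_1,\dots,y_d):\alpha+\sum_{j=1}^{d}\beta_j\sin(y_j)>0\}\), which is exactly~\eqref{eqn:sindensity}. The main (and really only mildly delicate) obstacle I anticipate is the level-set estimate \(\dd\phi(\{g=c\})=0\): it is what upgrades the one-sided Portmanteau inequalities to genuine limits and what allows the sequence \(x_n\) to be absorbed; the rest is routine manipulation of weak convergence and of continuity of measure along monotone sequences of sets.
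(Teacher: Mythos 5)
Your proof is correct and follows essentially the same route as the paper's: Kronecker--Weyl equidistribution plus Portmanteau applied to the unperturbed set \(\{g>0\}\), followed by the \(\epsilon\)-sandwich to absorb the sequence \(x_n\). The only difference is that you explicitly verify that the level sets of \(g\) are Haar-null (via Fubini) and use translation invariance to remove the phases \(\omega_j\) --- details the paper leaves implicit, and which in fact justify both its blanket use of \(\dd\phi_n(S)\to\dd\phi(S)\) (valid only for continuity sets) and the final passage \(\epsilon\to 0\).
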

\begin{proof}
    When \(x_n\) is identically \(0\), this follows from the previous discussion applied to the Borel set
    \[
        S = g^{-1}((0, \infty)),\quad g(y_1, \dots, y_d) = \alpha + \sum_{j=1}^{d} \beta_j \sin(y_j)
    \]
    where \(g\) is continuous on \(\bT^d\).
    The boundary \(\partial S\) is contained in \(g^{-1}(\{0\})\), which has measure zero since it is the zero set of a nontrivial analytic function.
    For general \(x_n\), fix \(\epsilon > 0\) and let \(N \in \bN\) be such that \(|x_n| < \epsilon\) for all \(n \ge N\).
    Then \(g(y_1, \dots, y_d) > \epsilon\) implies \(g(y_1, \dots, y_d) > x_n\) and the limit~\eqref{eqn:sindensity} is bounded above by
    \[
    \lim_{n \to \infty} \frac{1}{n} \#\left\{1 \le k \le n : \alpha + \sum_{j=1}^{d} \beta_j \sin(n\theta_j + \omega_j) > \epsilon\right\} = \dd \phi\left(\left\{\alpha - \epsilon + \sum_{j=1}^{d} \beta_j \sin(y_j) > 0\right\}\right).
    \]
    Similarly, we obtain a lower bound
    \[
    \lim_{n \to \infty} \frac{1}{n} \#\left\{1 \le k \le n : \alpha + \sum_{j=1}^{d} \beta_j \sin(n\theta_j + \omega_j) > -\epsilon\right\} = \dd \phi\left(\left\{\alpha + \epsilon + \sum_{j=1}^{d} \beta_j \sin(y_j) > 0\right\}\right).
    \]
    Since \(\epsilon\) is arbitrary, we can take \(\epsilon \to 0\) and obtain the desired result.
\end{proof}

\section{\texorpdfstring{\(L\)}{L}-functions}
\label{sec:Lfunc}

To study Shanks' bias, we consider the counts
\begin{align}
    a_{\pm}^{\mu}(n; m) &:= \# \{f \in A^\circ : \deg f = n,\,\mu(f) \chi_m(f) = \pm 1\}, \\
    a_{\pm}^{\lambda}(n; m) &:= \#\{ f \in A^\circ : \deg f = n,\,\lambda(f) \chi_m(f) = \pm 1 \},
\end{align}
for each \(n \ge 1\).
Define the \emph{cumulative} counts
\begin{equation}
    A_{\pm}^{\bullet}(n; m) := \sum_{k = 1}^{n} a_{\pm}^{\bullet}(k; m)
\end{equation}
for \( \bullet \in \{\mu, \lambda\}\).
Then our goal is to analyze the biases
\begin{align}
    b^\bullet(n;m) &:= a_+^\bullet(n;m) - a_-^\bullet(n;m) \\
    B^\bullet(n;m) &:= A_+^\bullet(n;m) - A_-^\bullet(n;m),
\end{align}
i.e. how often do we have \(b^\bullet(n; m) > 0\) or \(B^\bullet(n;m) > 0\).

The following proposition shows that the biases have nice \(L\)-functions.
\begin{proposition}
    \label{prop:biasL}
    Let
    \begin{align}
        L^\lambda(s; m) &:= \sum_{f \in A^\circ} \frac{\lambda(f) \chi_m(f)}{|f|^{s}}, \\
        L^\mu(s; m) &:= \sum_{f \in A^\circ} \frac{\mu(f) \chi_m(f)}{|f|^{s}}.
    \end{align}
    and define \(\cL^\bullet(u; m) := L^\bullet(s; m)\) for \(u = q^{-s}\) and \(\bullet \in \{\lambda, \mu\}\).
    Then 
    \begin{align}
        \cL^\lambda(u; m) &= \sum_{n \ge 0} b^\lambda(n;m) u^n \label{eqn:cLlambda} \\
        \cL^\mu(u; m) &= \sum_{n \ge 0} b^\mu(n;m) u^n  \label{eqn:cLmu}
    \end{align}
    and they admit Euler factorizations
    \begin{align}
        L^\lambda(s; m) &= \prod_{P} \left(1 + \frac{\chi_m(P)}{|P|^{s}}\right)^{-1} \label{eqn:Llambda} \\
        L^\mu(s; m) &= \prod_{P} \left(1 - \frac{\chi_m(P)}{|P|^{s}} \right) = \frac{1}{L(s, \chi_m)}. \label{eqn:Lmu}
    \end{align}
\end{proposition}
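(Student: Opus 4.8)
The plan is to treat the statement as two independent formal identities---the power-series expansions \eqref{eqn:cLlambda}--\eqref{eqn:cLmu} and the Euler factorizations \eqref{eqn:Llambda}--\eqref{eqn:Lmu}---each verified on the domain of absolute convergence and then read off. First I would record the convergence bound that makes every manipulation below legitimate: since $\#\{f \in A^\circ : \deg f = n\} = q^{n}$ and $|\lambda(f)\chi_m(f)|, |\mu(f)\chi_m(f)| \le 1$, both Dirichlet series are dominated by $\sum_{n \ge 0} q^{n(1-\sigma)}$, hence converge absolutely for $\sigma = \Re s > 1$, i.e. for $|u| < q^{-1}$.

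For the power-series expansions I would use $|f| = q^{\deg f}$, so $|f|^{-s} = u^{\deg f}$, and group $L^\lambda(s;m)$ by the degree of $f$:
\[
\cL^\lambda(u;m) = \sum_{f \in A^\circ} \lambda(f)\chi_m(f)\, u^{\deg f} = \sum_{n \ge 0} \Bigl( \sum_{\substack{f \in A^\circ,\ \deg f = n}} \lambda(f)\chi_m(f) \Bigr) u^n .
\]
For fixed $n$ each term $\lambda(f)\chi_m(f)$ lies in $\{-1,0,+1\}$---it vanishes exactly when $\gcd(f,m) \ne 1$ and is $\pm 1$ otherwise---so the inner sum is $a_+^\lambda(n;m) - a_-^\lambda(n;m) = b^\lambda(n;m)$ (with $b^\lambda(0;m) = 1$ from $f = 1$), which is \eqref{eqn:cLlambda}. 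The verbatim argument with $\mu(f)\chi_m(f) \in \{-1,0,+1\}$ gives \eqref{eqn:cLmu}.

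For the Euler factorizations I would invoke multiplicativity. Both $\lambda$ and $\chi_m$ are completely multiplicative, hence so is $\lambda \cdot \chi_m$, and at each monic irreducible $P$ one has $\lambda(P)\chi_m(P) = -\chi_m(P)$ since $\lambda(P) = -1$; expanding each local factor as a geometric series,
\[
L^\lambda(s;m) = \prod_P \sum_{e \ge 0} \Bigl( \frac{\lambda(P)\chi_m(P)}{|P|^{s}} \Bigr)^{e} = \prod_P \Bigl( 1 - \frac{\lambda(P)\chi_m(P)}{|P|^{s}} \Bigr)^{-1} = \prod_P \Bigl( 1 + \frac{\chi_m(P)}{|P|^{s}} \Bigr)^{-1},
\]
which is \eqref{eqn:Llambda}. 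For $\mu$, the function $\mu \cdot \chi_m$ is multiplicative and supported on square-free polynomials, with $(\mu\chi_m)(P) = -\chi_m(P)$ and $(\mu\chi_m)(P^{e}) = 0$ for $e \ge 2$, so each local factor collapses and
\[
L^\mu(s;m) = \prod_P \Bigl( 1 - \frac{\chi_m(P)}{|P|^{s}} \Bigr) = L(s,\chi_m)^{-1},
\]
the last equality being the Euler product for $L(s,\chi_m)$ recalled in Section~\ref{sec:preliminaries}; this is \eqref{eqn:Lmu}.

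The proof is entirely formal, so I do not expect a genuine obstacle; the only points that need a little care are the bookkeeping around the value $0$ of $\lambda(f)\chi_m(f)$ (resp.\ $\mu(f)\chi_m(f)$) at non-coprime (resp.\ non-square-free) $f$, and checking that the sum--product interchange is licit, both of which are handled by the absolute-convergence estimate. In particular no analytic input (GRH or GSH) is used at this stage.
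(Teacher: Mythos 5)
Your proof is correct and follows essentially the same route as the paper: expand the Dirichlet series by degree to read off the coefficients $b^\bullet(n;m)$, then use multiplicativity of $\lambda\chi_m$ (resp.\ $\mu\chi_m$ with $\mu(P^e)=0$ for $e\ge 2$) to obtain the Euler factorizations. The only difference is that you spell out the absolute-convergence region and the $n=0$ term explicitly, which the paper leaves implicit.
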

\begin{proof}
    We have
    \[
        \cL^\lambda(u;m) = \sum_{n \ge 0} \left(\sum_{\substack{f \in A^\circ \\ \deg f = n}}\lambda(f) \chi_m(f)\right) u^{n}
    \]
    and the \(n\)-th coefficient of \(\cL^\lambda(u;m)\) is \(a_+^\lambda(n;m) - a_-^\lambda(n;m) = b^\lambda(n;m)\) by definition.
    Moreover, since \(\lambda\) and \(\chi_m\) are multiplicative, \(L^\lambda(s;m)\) admits an Euler factorization
    \[
        L^\lambda(s;m) = \prod_P \left(1 - \frac{\chi_m(P)}{|P|^s} + \frac{\chi_m(P^2)}{|P|^{2s}} - \cdots\right) = \prod_P \left(1 + \frac{\chi_m(P)}{|P|^{s}}\right)^{-1}.
    \]
    The identity~\eqref{eqn:cLmu} for \(\cL^\mu(u;m)\) follows similarly, while the Euler factorization~\eqref{eqn:Lmu} comes from the multiplicativity of $\mu, \chi_m$ and $\mu(P^2) = 0$.
\end{proof}

As a corollary, we obtain a simple expression for \(L^\lambda(s;m)\) analogous to~\eqref{eqn:LfuncShanks}.
\begin{corollary}
    \label{cor:Llambda}
    Write \(m = m_1 \cdots m_r\) as a product of distinct irreducible monic polynomials \(m_i\) in \(A\) and let \(M_i := \deg m_i\).
    Then
    \begin{equation}
        \cL^\lambda(u; m) = \frac{\prod_{i=1}^{r} (1 - u^{2M_i})}{1 - qu^{2}}\, \frac{1}{\cL(u, \chi_m)}.
    \end{equation}
    In particular, this shows that \(\cL^\lambda(u; m)\) is a rational function in \(u\).
\end{corollary}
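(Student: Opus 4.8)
The plan is to derive the formula directly from the Euler factorization \eqref{eqn:Llambda} of Proposition~\ref{prop:biasL}, using the elementary identity $(1+x)^{-1} = (1-x)(1-x^2)^{-1}$ (a formal power series identity in $x$) to ``complete the square'' in each local factor. Taking $x = \chi_m(P)|P|^{-s}$ gives
\[
\left(1 + \frac{\chi_m(P)}{|P|^{s}}\right)^{-1} = \frac{1 - \chi_m(P)|P|^{-s}}{1 - \chi_m(P)^2|P|^{-2s}}.
\]
Since $\chi_m$ is the quadratic character modulo $m$, one has $\chi_m(P)^2 = 1$ whenever $P \nmid m$, while $\chi_m(P) = 0$ (and the factor is trivial) whenever $P \mid m$. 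Hence
\[
L^\lambda(s;m) = \prod_{P\,\nmid\, m} \frac{1 - \chi_m(P)|P|^{-s}}{1 - |P|^{-2s}} = \left(\prod_{P\,\nmid\, m}\bigl(1 - \chi_m(P)|P|^{-s}\bigr)\right)\left(\prod_{P\,\nmid\, m}\bigl(1 - |P|^{-2s}\bigr)^{-1}\right).
\]

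Next I would evaluate the two products separately. Because $\chi_m$ vanishes on the primes dividing $m$, the Euler product for $L(s,\chi_m)$ recalled in Section~\ref{sec:preliminaries} is unchanged upon restricting to $P \nmid m$, so the first product is exactly $L(s,\chi_m)^{-1} = \cL(u,\chi_m)^{-1}$. For the second, insert the missing ramified factors: $\prod_{P\,\nmid\, m}(1-|P|^{-2s})^{-1} = \zeta(2s)\prod_{P\,\mid\, m}(1-|P|^{-2s})$. Here $\zeta(2s) = (1 - q^{1-2s})^{-1} = (1 - qu^{2})^{-1}$ in the variable $u = q^{-s}$, and if $m = m_1\cdots m_r$ with $\deg m_i = M_i$ then the primes dividing $m$ are exactly the $m_i$, with $|m_i|^{-2s} = q^{-2M_i s} = u^{2M_i}$, so $\prod_{P\,\mid\, m}(1-|P|^{-2s}) = \prod_{i=1}^{r}(1 - u^{2M_i})$. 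Substituting both evaluations yields
\[
\cL^\lambda(u;m) = \frac{\prod_{i=1}^{r}(1 - u^{2M_i})}{1 - qu^{2}}\,\frac{1}{\cL(u,\chi_m)},
\]
and since $\cL(u,\chi_m)$ is a polynomial by Theorem~\ref{thm:Lfuncquad}, the right-hand side is manifestly a rational function of $u$.

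Regarding rigor, all products involved converge for $|u|$ small --- or, equivalently, converge in the ring of formal power series in $u$, since the local factor attached to a prime of degree $d$ differs from $1$ only in degrees $\ge d$ --- so each manipulation is a legitimate identity of (convergent) Euler products, hence of the power series $\cL^\lambda(u;m) = \sum_{n\ge 0} b^\lambda(n;m)\,u^n$ from \eqref{eqn:cLlambda}; the asserted equality of this series with the displayed rational function then holds identically. I do not anticipate a real obstacle; the only point demanding care is the bookkeeping for the ramified primes $P \mid m$ --- removing them from the $\chi_m$-product, where they contribute $1$, and reinstating them in the zeta-product as the factors $1 - u^{2M_i}$.
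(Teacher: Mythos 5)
Your proof is correct and follows essentially the same route as the paper: the paper computes the ratio $L^\lambda(s;m)/L^\mu(s;m)=\prod_{P\nmid m}(1-|P|^{-2s})^{-1}=\zeta(2s)\prod_i(1-|m_i|^{-2s})$ and then invokes $L^\mu(s;m)=1/L(s,\chi_m)$, whereas you split each Euler factor via $(1+x)^{-1}=(1-x)(1-x^2)^{-1}$ and identify the two resulting products directly — the same identity and the same ramified-prime bookkeeping, just packaged locally rather than through $L^\mu$.
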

\begin{proof}
    \begin{align*}
        \frac{L^\lambda(s; m)}{L^\mu(s; m)} &= \prod_{P} \left(1 - \frac{\chi_m(P)^2}{|P|^{2s}}\right)^{-1} \\
        &= \prod_{P \nmid m} \left(1 - \frac{1}{|P|^{2s}}\right)^{-1} \\
        &= \zeta(2s) \cdot \prod_{i=1}^{r} (1 - |m_i|^{-2s}) \\
        &= \frac{\prod_{i=1}^{r} (1 - u^{2M_i})}{1 - qu^2}
    \end{align*}
    and the results follows from~\eqref{eqn:Lmu}.
\end{proof}

\begin{corollary}
    \label{cor:Llambdacum}
    The generating functions for the cumulative biases are given by
    \begin{align}
        \sum_{n\ge1}B^\bullet(n;m) u^n = \frac{\cL^\bullet(u;m)-1}{1-u},\quad \bullet\in\{\mu,\lambda\}.
    \end{align}
\end{corollary}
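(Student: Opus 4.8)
The plan is to treat this as a routine formal power series identity, obtained by multiplying the generating function for the $b^\bullet$ by the geometric series $1/(1-u)$, after accounting for the constant term. The only input needed is the identity $\cL^\bullet(u;m)=\sum_{n\ge0}b^\bullet(n;m)u^n$ from Proposition~\ref{prop:biasL}, together with a computation of the constant term $b^\bullet(0;m)$.

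First I would record that the degree-$0$ monic polynomial is $f=1$, for which $\lambda(1)=\mu(1)=\chi_m(1)=1$, so $\lambda(1)\chi_m(1)=\mu(1)\chi_m(1)=1$; hence $a_+^\bullet(0;m)=1$, $a_-^\bullet(0;m)=0$, and therefore $b^\bullet(0;m)=1$ for $\bullet\in\{\mu,\lambda\}$. Consequently $\sum_{n\ge1}b^\bullet(n;m)u^n=\cL^\bullet(u;m)-1$. Next I would use the definition $B^\bullet(n;m)=\sum_{k=1}^{n}b^\bullet(k;m)$ (note the sum starts at $k=1$, matching the fact that we dropped the constant term above) and the standard Cauchy-product identity
\[
\sum_{n\ge1}\Bigl(\sum_{k=1}^{n}c_k\Bigr)u^n=\frac{1}{1-u}\sum_{k\ge1}c_k u^k,
\]
valid in $\bQ[[u]]$ for any sequence $(c_k)_{k\ge1}$, applied with $c_k=b^\bullet(k;m)$. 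Substituting $\sum_{k\ge1}b^\bullet(k;m)u^k=\cL^\bullet(u;m)-1$ gives exactly
\[
\sum_{n\ge1}B^\bullet(n;m)u^n=\frac{\cL^\bullet(u;m)-1}{1-u},
\]
as claimed.

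There is essentially no genuine obstacle here: the argument is a formal manipulation of power series. The only point requiring a moment's care is the bookkeeping of indices --- that the cumulative sum $B^\bullet$ starts at $k=1$ rather than $k=0$, which is precisely why the numerator is $\cL^\bullet(u;m)-1$ rather than $\cL^\bullet(u;m)$ --- and verifying $b^\bullet(0;m)=1$; I would make sure both are stated explicitly. If desired, one can alternatively phrase the Cauchy-product step as: $(1-u)\sum_{n\ge1}B^\bullet(n;m)u^n = \sum_{n\ge1}\bigl(B^\bullet(n;m)-B^\bullet(n-1;m)\bigr)u^n = \sum_{n\ge1}b^\bullet(n;m)u^n$ (with $B^\bullet(0;m):=0$), which telescopes and makes the identity transparent without invoking a named lemma.
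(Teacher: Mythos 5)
Your proposal is correct and matches the paper's proof: the paper likewise multiplies $\sum_{n\ge1}b^\bullet(n;m)u^n=\cL^\bullet(u;m)-1$ by the geometric series $1/(1-u)$ to collect the partial sums $B^\bullet(n;m)$. Your explicit verification of the constant term $b^\bullet(0;m)=1$ is a small bookkeeping detail the paper leaves implicit, but the argument is the same.
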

\begin{proof}
Both follow from the following identity
\[
\frac{\sum_{n \ge 1} a_n u^n}{1 - u} = \left(\sum_{n \ge 1} a_n u^n\right) (1 + u + u^2 + \cdots) = \sum_{n \ge 1} \left(\sum_{1 \le k \le n} a_k \right) u^n.
\]
\end{proof}

\section{Density and biases}
\label{sec:density}

The natural densities for the degree of polynomials are defined as
\begin{align*}
\delta_+^\bullet(m) := \lim_{n \to \infty}\frac{\#\{1 \le k \le n : A_+^\bullet(k; m) - A_-^\bullet(k; m) > 0\}}{n}, \\
\delta_0^\bullet(m) := \lim_{n \to \infty}\frac{\#\{1 \le k \le n : A_+^\bullet(k; m) - A_-^\bullet(k; m) = 0\}}{n}, \\
\delta_-^\bullet(m) := \lim_{n \to \infty}\frac{\#\{1 \le k \le n : A_+^\bullet(k; m) - A_-^\bullet(k; m) < 0\}}{n}.
\end{align*}
for \(\bullet \in \{\mu, \lambda\}\).
We also consider the \emph{non-cumulative} version of the densities as
\begin{align*}
\delta_+^{\bullet, \nc}(m) := \lim_{n \to \infty}\frac{\#\{1 \le k \le n : a_+^\bullet(k; m) - a_-^\bullet(k; m) > 0\}}{n}, \\
\delta_0^{\bullet, \nc}(m) := \lim_{n \to \infty}\frac{\#\{1 \le k \le n : a_+^\bullet(k; m) - a_-^\bullet(k; m) = 0\}}{n}, \\
\delta_-^{\bullet, \nc}(m) := \lim_{n \to \infty}\frac{\#\{1 \le k \le n : a_+^\bullet(k; m) - a_-^\bullet(k; m) < 0\}}{n}.
\end{align*}

The following lemma is helpful for the computations that follow.
\begin{lemma}
    \label{lem:sinasum}
    For \(a, \theta, \omega \in \bR\), we have
    \begin{equation}
    \label{eqn:sinasum}
        \sum_{k=1}^{n} a^k \sin(k \theta + \omega) = \frac{a \sin(\theta + \omega) - a^2 \sin \omega - a^{n+1} \sin((n+1)\theta + \omega) + a^{n+2} \sin(n\theta + \omega)}{1 - 2a \cos \theta + a^2}.
    \end{equation}
    In particular, we have
    \begin{align}
        \sin(2\theta) + \sin(4\theta) + \cdots + \sin(2k\theta) &= \frac{\cos\theta - \cos((2k+1)\theta)}{2\sin\theta} \label{eqn:sinsumeven}\\
        \sin \theta + \sin (3 \theta) + \cdots + \sin((2k+1)\theta) &= \frac{1 - \cos((2k+2)\theta)}{2 \sin \theta} \label{eqn:sinsumodd}
    \end{align}
\end{lemma}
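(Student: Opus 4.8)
The plan is to reduce the first identity to the sum of a finite geometric series via the substitution $\sin(k\theta+\omega) = \operatorname{Im}\bigl(e^{i(k\theta+\omega)}\bigr)$. First I would write
\[
\sum_{k=1}^n a^k\sin(k\theta+\omega) = \operatorname{Im}\!\left(e^{i\omega}\sum_{k=1}^n (ae^{i\theta})^k\right) = \operatorname{Im}\!\left(e^{i\omega}\cdot\frac{ae^{i\theta}-a^{n+1}e^{i(n+1)\theta}}{1-ae^{i\theta}}\right),
\]
which is legitimate whenever $ae^{i\theta}\neq 1$; the degenerate case $a=1$ and $\theta\in 2\pi\bZ$ is harmless since the claimed denominator $1-2a\cos\theta+a^2$ vanishes there anyway, so one reads~\eqref{eqn:sinasum} as an identity valid off that locus (equivalently, as an identity of rational functions in $a$ and $e^{i\theta}$). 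To extract the imaginary part cleanly I would multiply numerator and denominator by the conjugate $1-ae^{-i\theta}$, turning the denominator into $\lvert 1-ae^{i\theta}\rvert^2 = 1-2a\cos\theta+a^2$ and the numerator into
\[
ae^{i(\theta+\omega)} - a^2 e^{i\omega} - a^{n+1}e^{i((n+1)\theta+\omega)} + a^{n+2}e^{i(n\theta+\omega)},
\]
whose imaginary part is exactly the numerator appearing in~\eqref{eqn:sinasum}.

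For the two displayed special cases I would specialize $a=1$ and use $2-2\cos\phi = 4\sin^2(\phi/2)$ in the denominator. For~\eqref{eqn:sinsumeven}, apply~\eqref{eqn:sinasum} with $\theta\mapsto 2\theta$, $\omega=0$, $n=k$; the numerator $\sin 2\theta + \sin 2k\theta - \sin(2(k+1)\theta)$ simplifies via $\sin 2\theta = 2\sin\theta\cos\theta$ and $\sin 2k\theta - \sin(2k\theta+2\theta) = -2\sin\theta\cos((2k+1)\theta)$ to $2\sin\theta\bigl(\cos\theta - \cos((2k+1)\theta)\bigr)$, and dividing by $4\sin^2\theta$ gives the stated formula. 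For~\eqref{eqn:sinsumodd}, reindex $\sin\theta+\sin 3\theta+\cdots+\sin((2k+1)\theta) = \sum_{j=1}^{k+1}\sin(2j\theta-\theta)$ and apply~\eqref{eqn:sinasum} with $\theta\mapsto 2\theta$, $\omega=-\theta$, $n=k+1$; the numerator collapses, after the analogous sum-to-product step $\sin((2k+1)\theta)-\sin((2k+3)\theta) = -2\sin\theta\cos((2k+2)\theta)$, to $2\sin\theta\bigl(1-\cos((2k+2)\theta)\bigr)$, and dividing by $4\sin^2\theta$ finishes it.

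There is no real obstacle here; the only points requiring care are (i) recording the hypothesis that $1-2a\cos\theta+a^2\neq 0$ so that the geometric-series step and the conjugation step are valid, and (ii) bookkeeping in the product-to-sum simplifications so the factor $\sin\theta$ cancels correctly against $\sin^2\theta$ in the denominators of the two corollaries. If one prefers to avoid complex exponentials, an alternative is to multiply the left-hand side of~\eqref{eqn:sinasum} by $1-2a\cos\theta+a^2$, expand each $\cos\theta\sin(k\theta+\omega) = \tfrac{1}{2}\bigl(\sin((k+1)\theta+\omega)+\sin((k-1)\theta+\omega)\bigr)$, and observe that the resulting sum telescopes; this uses nothing beyond the angle-addition formula but is slightly more tedious.
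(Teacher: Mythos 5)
Your proof is correct and follows the same route as the paper: write $a^k\sin(k\theta+\omega)$ as $\Im\bigl((ae^{i\theta})^k e^{i\omega}\bigr)$, sum the geometric series, and specialize (with $a=1$, $\theta\mapsto 2\theta$) to get the two displayed identities. The only differences are presentational — you spell out the conjugation step, the sum-to-product simplifications, and the degenerate-denominator caveat, which the paper leaves implicit.
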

\begin{proof}
    Use \(a^k \sin(k\theta + \omega) = a^k \Im (e^{i(k\theta + \omega)}) = \Im((a e^{i\theta})^k \cdot e^{i\omega})\) and the geometric series formula.
    The last two equations easily follow from~\eqref{eqn:sinasum}.
\end{proof}

Let us consider the \(\mu\)-bias first.
Since the \(L\)-functions for \(b^\lambda(n;m)\) and \(B^\lambda(n;m)\) are rational functions, they satisfy (\(M\)-term) linear recurrence relations.
Thus, one can express \(b^\lambda(n;m)\) and \(B^\lambda(n;m)\) as linear combinations of trigonometric functions.
\begin{proposition}
\label{prop:bmu}
    Let \(m\in A\) be a monic square-free polynomial of degree \(M\) and \(\chi_m\) be the unique nontrivial quadratic character modulo \(m\).
    Define \(M'\) and \(\theta_1, \dots, \theta_{M'}\) as in Theorem~\ref{thm:Lfuncquad}.
    Then we have
    \begin{equation}
        \label{eqn:bmu}
        b^\mu(n;m) = q^{\frac{n}{2}}\sum_{j=1}^{M'} \gamma_j \sin(n \theta_j + \phi_j) + \begin{cases}
            \delta & \text{if }2 \mid M \\
            0 & \text{if }2 \nmid M
        \end{cases}
    \end{equation}
    and
    \begin{equation}
    \begin{aligned}
        \label{eqn:Bmu}
        B^\mu(n;m) &= \sum_{j=1}^{\frac{M-1}{2}} \gamma_j  \frac{q^{\frac{1}{2}} \sin(\theta_j + \phi_j) - q \sin \phi_j - q^{\frac{n+1}{2}} \sin((n+1)\theta_j + \phi_j) + q^{\frac{n+2}{2}} \sin(n\theta_j + \phi_j)}{1 - 2q^{\frac{1}{2}} \cos \theta_j + q} \\
        &\quad+ \begin{cases}
            n \delta & \text{if }2 \mid M \\
            0 & \text{if }2 \nmid M
        \end{cases}
    \end{aligned}
    \end{equation}
    for some \(\gamma_1, \dots, \gamma_{M'}, \phi_1, \dots, \phi_{M'}, \delta \in \bR\).
\end{proposition}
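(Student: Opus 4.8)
The plan is to read $b^\mu(n;m)$ off the explicit rational function $\cL^\mu(u;m)=1/\cL(u,\chi_m)$ by partial fractions, and then to obtain $B^\mu(n;m)$ by summing the resulting finite geometric-type series with Lemma~\ref{lem:sinasum}. First, by Proposition~\ref{prop:biasL} (equations~\eqref{eqn:cLmu} and~\eqref{eqn:Lmu}) we have $\sum_{n\ge0}b^\mu(n;m)u^n=\cL^\mu(u;m)=1/\cL(u,\chi_m)$, and by Theorem~\ref{thm:Lfuncquad} the denominator factors over $\bC$ as
\[
\cL(u,\chi_m)=\Bigl(\textstyle\prod_{j=1}^{M'}(1-\sqrt q\,e^{i\theta_j}u)(1-\sqrt q\,e^{-i\theta_j}u)\Bigr)\cdot\begin{cases}(1-u)&2\mid M,\\ 1&2\nmid M,\end{cases}
\]
a polynomial of degree $M-1$. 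Hence $1/\cL(u,\chi_m)$ is a proper rational function (numerator $1$) whose poles lie among $u=q^{-1/2}e^{\pm i\theta_j}$, together with $u=1$ exactly when $M$ is even.

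Next I would write out the partial-fraction expansion
\[
\frac{1}{\cL(u,\chi_m)}=\sum_{j=1}^{M'}\Bigl(\frac{A_j}{1-\sqrt q\,e^{i\theta_j}u}+\frac{\overline{A_j}}{1-\sqrt q\,e^{-i\theta_j}u}\Bigr)+\begin{cases}\delta/(1-u)&2\mid M,\\ 0&2\nmid M,\end{cases}
\]
where the residues at conjugate poles are complex conjugates since $\cL(u,\chi_m)\in\bR[u]$, and $\delta\in\bR$ since $u=1$ is a real simple pole. Expanding each summand geometrically and reading off the coefficient of $u^n$ gives $b^\mu(n;m)=q^{n/2}\sum_{j=1}^{M'}2\,\mathrm{Re}(A_je^{in\theta_j})$, plus $\delta$ when $2\mid M$; writing $2A_j=\gamma_je^{i(\phi_j-\pi/2)}$ with $\gamma_j,\phi_j\in\bR$ turns $2\,\mathrm{Re}(A_je^{in\theta_j})$ into $\gamma_j\sin(n\theta_j+\phi_j)$, which is exactly~\eqref{eqn:bmu}. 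For the cumulative bias I would then sum termwise, $B^\mu(n;m)=\sum_{k=1}^n b^\mu(k;m)=\sum_{j=1}^{M'}\gamma_j\sum_{k=1}^n q^{k/2}\sin(k\theta_j+\phi_j)$ (plus $n\delta$ when $2\mid M$, since the constant $\delta$ is summed $n$ times), and apply Lemma~\ref{lem:sinasum} (equation~\eqref{eqn:sinasum}) with $a=q^{1/2}$, $\theta=\theta_j$, $\omega=\phi_j$ to each inner sum, which produces~\eqref{eqn:Bmu} term by term; here $1-2q^{1/2}\cos\theta_j+q=|1-\sqrt q\,e^{i\theta_j}|^2>0$ since $q>1$, so the lemma applies without exception.

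The step that genuinely needs care is the partial-fraction decomposition: it has the clean form above precisely when the poles of $1/\cL(u,\chi_m)$ are simple, equivalently when the inverse zeros $\sqrt q\,e^{\pm i\theta_j}$ (and $1$, when $M$ is even) are pairwise distinct. A coincidence $\theta_j=\theta_k$ arising from two \emph{distinct} linear factors is harmless, since the two equal-frequency sinusoids recombine into a single term $\gamma\sin(n\theta+\phi)$ of the asserted shape; the only real obstruction is a zero of $\cL(u,\chi_m)$ of multiplicity $\ge2$, which would contribute a summand $A/(1-\sqrt q\,e^{i\theta}u)^2$ and hence an $n\,q^{n/2}$-type term incompatible with~\eqref{eqn:bmu}. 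I would therefore first record, or assume as is standard in this setting, the simplicity of the zeros of $\cL(u,\chi_m)$; after that the remainder is bookkeeping — pinning down the conventions for $\gamma_j,\phi_j,\delta$ and checking the $M$ odd and $M$ even cases of Theorem~\ref{thm:Lfuncquad} separately (with the degenerate case $M=1$, where $\cL(u,\chi_m)=1$ and $M'=0$, reducing to $b^\mu(n;m)=0$ for $n\ge1$).
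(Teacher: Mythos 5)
Your proof is correct and takes essentially the same route as the paper: both derive the shape of \(b^\mu(n;m)\) from the zero structure of \(\cL(u,\chi_m)\) given in Theorem~\ref{thm:Lfuncquad} (you via a partial-fraction expansion of the rational generating function \(1/\cL(u,\chi_m)\), the paper via the equivalent linear-recurrence/characteristic-root argument), and both then obtain \(B^\mu(n;m)\) by summing with Lemma~\ref{lem:sinasum} at \(a=q^{1/2}\). Your explicit caveat about simplicity of the zeros is apt rather than a defect — the paper's recurrence argument tacitly needs the same hypothesis (which holds under the GSH assumed wherever the proposition is used), and your observation that repeated angles from distinct factors recombine harmlessly while a genuine multiple zero would break the stated form is accurate.
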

\begin{proof}
    If \(M\) is odd, then by Proposition~\ref{prop:biasL}, \(b^\mu(n;m)\) satisfies the homogenous linear recurrence relation
    \[
    \sum_{k=0}^{M-1} a_k b^\mu(n-k;m) = 0 
    \]
    for $n \ge 1$, when \(\cL(u, \chi_m) = \sum_{k=0}^{M-1} a_k u^k\). The characteristic polynomial of this recurrence relation is the reciprocal polynomial of \(\cL(u, \chi_m)\), which has roots \(q^{\frac{1}{2}} e^{i\theta_j}\) and \(q^{\frac{1}{2}} e^{-i\theta_j}\) for \(j = 1, \dots, M'\).
    Hence \(b^\mu(n;m)\) can be expressed as a linear combination of \(q^{\frac{n}{2}} e^{in\theta_j}\) and \(q^{\frac{n}{2}} e^{-in\theta_j}\), and since they are all real, we can write it as a linear combination of \(q^{\frac{n}{2}}\sin(n\theta_j + \phi_j)\) for some \(\phi_j \in \bR\).
    For even \(M\), we have an extra term \(\delta \in \bR\) coming from the zero at \(u=1\) of \(\cL(u, \chi_m)\).
    ~\eqref{eqn:Bmu} follows from~\eqref{eqn:sinasum}.
\end{proof}

Using this, we can show that there's no Shanks' bias if we consider only square-free polynomials under GSH.
\begin{theorem}
    \label{thm:mainmu}
    Assume that \(\cL(u, \chi_m)\) satisfies GSH.
    Then
    \begin{align}
        \delta_+^{\mu, \nc}(m) = \delta_-^{\mu, \nc}(m) = \frac{1}{2},&\, \delta_0^{\mu, \nc}(m) = 0, \label{eqn:deltamunc} \\
        \delta_+^{\mu}(m) = \delta_-^{\mu}(m) = \frac{1}{2},&\, \delta_0^{\mu}(m) = 0. \label{eqn:deltamu}
    \end{align}
\end{theorem}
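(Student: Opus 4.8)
The plan is to feed the explicit trigonometric formulas of Proposition~\ref{prop:bmu} into Corollary~\ref{cor:sindensity}, reducing everything to a symmetry computation on the torus $\bT^{M'}$. GSH for $\cL(u,\chi_m)$ enters both as the $\bQ$-linear independence of $\pi,\theta_1,\dots,\theta_{M'}$ and, via simplicity of the zeros, as the nonvanishing of the coefficients $\gamma_j$ (established at the end; this is where $M'\ge1$, i.e.\ $\deg m\ge3$, is used --- for $\deg m\le2$ the $b^\mu(n;m)$ are eventually constant and the statement must be read accordingly). For~\eqref{eqn:deltamunc}: by~\eqref{eqn:bmu}, $b^\mu(n;m)>0$ if and only if $\sum_{j=1}^{M'}\gamma_j\sin(n\theta_j+\phi_j)>x_n$, where $x_n:=-\delta q^{-n/2}$ if $2\mid M$ and $x_n:=0$ otherwise, so $x_n\to0$. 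Corollary~\ref{cor:sindensity} (with $\alpha=0$, $\beta_j=\gamma_j$, $\omega_j=\phi_j$) then gives $\delta_+^{\mu,\nc}(m)=\dd\phi(\{g>0\})$ for $g(y_1,\dots,y_{M'}):=\sum_j\gamma_j\sin y_j$ (the phases drop out by translation-invariance), and applied to $-b^\mu(n;m)$ it gives $\delta_-^{\mu,\nc}(m)=\dd\phi(\{g<0\})$, whence $\delta_0^{\mu,\nc}(m)=1-\delta_+^{\mu,\nc}(m)-\delta_-^{\mu,\nc}(m)=\dd\phi(\{g=0\})$. The measure-preserving involution $y\mapsto-y$ of $\bT^{M'}$ negates $g$, so $\dd\phi(\{g>0\})=\dd\phi(\{g<0\})$; and $\{g=0\}$ is $\dd\phi$-null, since fixing $j_0$ with $\gamma_{j_0}\ne0$ the equation $g=0$ has at most two solutions $y_{j_0}\in\bR/2\pi\bZ$ for each choice of the remaining coordinates (Fubini). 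This yields~\eqref{eqn:deltamunc}, hence $\delta_\pm^{\mu,\nc}(m)=\tfrac12$.

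For the cumulative version~\eqref{eqn:deltamu} I would first isolate the dominant part of~\eqref{eqn:Bmu}. Expanding $\sin((n+1)\theta_j+\phi_j)=\cos\theta_j\sin(n\theta_j+\phi_j)+\sin\theta_j\cos(n\theta_j+\phi_j)$, the two growing terms of sizes $q^{(n+1)/2}$ and $q^{(n+2)/2}$ in the $j$-th summand combine into $q^{n/2}\widetilde\gamma_j\sin(n\theta_j+\widetilde\phi_j)$ with $\widetilde\gamma_j^2=q\gamma_j^2/(1-2q^{1/2}\cos\theta_j+q)$, a nonzero scalar multiple of $\gamma_j^2$ because $1-2q^{1/2}\cos\theta_j+q=|1-q^{1/2}e^{i\theta_j}|^2>0$. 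What is left in~\eqref{eqn:Bmu} is a genuine constant $C$ (assembled from the $q^{1/2}\sin(\theta_j+\phi_j)-q\sin\phi_j$ pieces) together with $n\delta$ when $2\mid M$, and $q^{-n/2}(|C|+n|\delta|)\to0$. Hence $B^\mu(n;m)>0$ iff $\sum_j\widetilde\gamma_j\sin(n\theta_j+\widetilde\phi_j)>x_n'$ with $x_n'\to0$, and running the argument of the previous paragraph verbatim with the same $\theta_j$ and with $\widetilde g(y):=\sum_j\widetilde\gamma_j\sin y_j$ in place of $g$ gives $\delta_+^{\mu}(m)=\delta_-^{\mu}(m)=\tfrac12$ and $\delta_0^{\mu}(m)=0$.

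It remains to justify $\gamma_j\ne0$ for all $j$ (equivalently $\widetilde\gamma_j\ne0$), which is used in all four identities. By~\eqref{eqn:Lmu}, $\cL^\mu(u;m)=1/\cL(u,\chi_m)$, so its poles are exactly the zeros of $\cL(u,\chi_m)$; by Theorem~\ref{thm:Lfuncquad} and GSH these are the $2M'$ distinct points $q^{-1/2}e^{\pm i\theta_j}$, each of modulus $q^{-1/2}\ne1$, together with a simple pole at $u=1$ when $2\mid M$. The partial-fraction expansion of $\cL^\mu(u;m)$ therefore contains, for each $j$, a conjugate pair $c_j(1-\sqrt q\,e^{i\theta_j}u)^{-1}+\overline{c_j}(1-\sqrt q\,e^{-i\theta_j}u)^{-1}$ with $c_j\ne0$ (a residue of a nowhere-vanishing rational function at a simple pole), and comparing power-series coefficients with~\eqref{eqn:bmu} shows $\gamma_j$ is a positive multiple of $|c_j|$. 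The only genuinely computational step is the amplitude identity $\widetilde\gamma_j^2=q\gamma_j^2/(1-2q^{1/2}\cos\theta_j+q)$ in the cumulative case; everything else is a direct application of Proposition~\ref{prop:bmu} and Corollary~\ref{cor:sindensity}, and the main thing to watch is that the linearly growing drift $n\delta$ (for even $M$) remains negligible after dividing by $q^{n/2}$.
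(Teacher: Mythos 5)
Your proposal is correct and follows essentially the same route as the paper: feed the trigonometric expressions of Proposition~\ref{prop:bmu} into Corollary~\ref{cor:sindensity}, recombining the two growing terms of~\eqref{eqn:Bmu} into a single sine with amplitude $\gamma_j q^{1/2}/\sqrt{1-2q^{1/2}\cos\theta_j+q}$ and absorbing $C$ and $n\delta$ into a vanishing threshold $x_n$. You additionally spell out points the paper leaves implicit --- the symmetry $y\mapsto-y$ and the nullity of $\{g=0\}$ giving the value $\tfrac12$, the nonvanishing of the $\gamma_j$ via simple poles of $1/\cL(u,\chi_m)$ under GSH, and the implicit restriction $\deg m\ge 3$ --- all of which are accurate and welcome.
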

\begin{proof}
    The non-cumulative densities~\eqref{eqn:deltamunc} follow from Corollary~\ref{cor:sindensity} and~\eqref{eqn:bmu}, by taking $x_n = - \delta q^{-\frac{n}{2}}$.
    For the cumulative densities, assume \(M\) is odd. By~\eqref{eqn:Bmu}, \(B^\mu(n;m) > 0\) if and only if
    \begin{align}
        &C + q^{\frac{n+1}{2}} \sum_{j=1}^{\frac{M-1}{2}} \frac{-\sin\theta_j\cos(n\theta_j + \omega_j) + (q^{\frac{1}{2}} - \cos\theta_j)\sin(n\theta_j + \omega_j)}{1 - 2q^{\frac{1}{2}} \cos \theta_j + q} \nonumber \\
        &=C + q^{\frac{n+1}{2}} \sum_{j=1}^{\frac{M-1}{2}} \frac{\sin(n \theta_j + \omega_j')}{\sqrt{1 - 2q^{\frac{1}{2}}\cos \theta_j + q}} > 0 \nonumber \\
        &\Leftrightarrow \sum_{j=1}^{\frac{M-1}{2}} \frac{\sin(n \theta_j + \omega_j')}{\sqrt{1 - 2q^{\frac{1}{2}}\cos \theta_j + q}} > - C q^{-\frac{n+1}{2}}
    \end{align}
    where
    \[
        C = \sum_{j=1}^{\frac{M-1}{2}} \gamma_j \frac{q^{-\frac{1}{2}} \sin(\theta_j + \omega_j) - \sin \omega_j}{1 - 2 q^{\frac{1}{2}} \cos\theta_j + q}.
    \]
    Since \(-Cq^{-\frac{n+1}{2}} \to 0\) as \(n \to \infty\), we can apply Corollary~\ref{cor:sindensity} and obtain~\eqref{eqn:deltamu}.
    A similar argument works for even \(M\) as well, where the extra \(n\delta\) term does not affect the limit since \(n q^{-\frac{n+1}{2}} \to 0\) as \(n \to \infty\).
\end{proof}

For \(\lambda\)-bias, we can also express \(b^\lambda(n;m)\) and \(B^\lambda(n;m)\) as linear combinations of trigonometric functions. We have an extra non-constant term that becomes a source of bias.
\begin{theorem}
\label{thm:blambdabias}
    Assume that \(\cL(u, \chi_m)\) satisfies GSH. Let \(M \ge 3\).
    \begin{enumerate}
        \item Assume \(M\) is odd and let \(M' = \frac{M-1}{2}\). Then
        \begin{align}
            \label{eqn:blambda}
            b^\lambda(n;m) = q^{\frac{n}{2}}\left(\alpha_n + \sum_{j=1}^{M'} \beta_j \sin(n \theta_j + \omega_j)\right)
        \end{align}
        for all \(n \ge 2 M\), where \(\beta_j, \omega_j \in \bR\) are constants independent of \(n\).
        Also, \(\alpha_n\) can be expressed as
        \begin{align}
            \label{eqn:alphan}
            \alpha_n = \frac{\prod_{i=1}^{t} (1 - q^{-M_i})}{2^{M'} \prod_{j=1}^{M'} \sin^2 \theta_j} \times\begin{cases}
                e_{\even}^{(M')} & \text{if }2 \mid n \\
                e_{\odd}^{(M')} & \text{if }2 \nmid n
            \end{cases}
        \end{align}
        where \(e_k^{(M')}\) is a \(k\)-th elementary symmetric polynomial in \(M'\) variables \(\cos \theta_1, \dots, \cos\theta_{M'}\) and
        \begin{align}
            e_{\even}^{(M')} &= e_0^{(M')} + e_2^{(M')} + e_4^{(M')} + \cdots \label{eqn:eeven} \\
            e_{\odd}^{(M')} &= e_1^{(M')} + e_3^{(M')} + e_5^{(M')} + \cdots \label{eqn:eodd}
        \end{align}
        \item Assume \(M\) is even and let \(M' = \frac{M-2}{2}\). Then
        \begin{align}
            \label{eqn:blambdaevenM}
            b^\lambda(n;m) = q^{\frac{n}{2}} \left[\alpha_n' + \sum_{j=1}^{M'} \beta_j \sin(n\theta_j + \omega_j)\right] + C
        \end{align}
        holds for all \(n \ge 2M\), where \(C, \beta_j, \omega_j \in \bR\) are constants independent of \(n\) and \(\alpha_n'\) can be written as
        \begin{align}
            \label{eqn:alphanevenM}
            \alpha_n' = \frac{\prod_{i=1}^{t} (1 - q^{-M_i})}{2^{M'} \prod_{j=1}^{M'} \sin^2 \theta_j} \times \begin{cases}
                e_\even^{(M')} \frac{q}{q-1} + e_\odd^{(M')} \frac{q^{\frac{1}{2}}}{q-1} & \text{if }2 \mid n \\
                e_\even^{(M')} \frac{q^{\frac{1}{2}}}{q-1} + e_\odd^{(M')} \frac{q}{q-1} & \text{if }2 \nmid n
            \end{cases}
        \end{align}
        with $e_\even$ and $e_\odd$ defined as in~\eqref{eqn:eeven} and~\eqref{eqn:eodd}, respectively.
    \end{enumerate}
\end{theorem}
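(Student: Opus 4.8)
The plan is to read the coefficients $b^\lambda(n;m)$ off an explicit rational expression for the generating function. By Corollary~\ref{cor:Llambda}, combined with the factorization of $\cL(u,\chi_m)$ in Theorem~\ref{thm:Lfuncquad}, and writing $m=m_1\cdots m_r$ with $M_i=\deg m_i$ and $M'=\lfloor(M-1)/2\rfloor$,
\[
\cL^\lambda(u;m)=\frac{\prod_{i=1}^{r}(1-u^{2M_i})}{(1-qu^2)\,\cL(u,\chi_m)},
\]
where $\cL(u,\chi_m)=\prod_{j=1}^{M'}(1-2\sqrt q\cos\theta_j u+qu^2)$ if $M$ is odd and $\cL(u,\chi_m)=(1-u)\prod_{j=1}^{M'}(1-2\sqrt q\cos\theta_j u+qu^2)$ if $M$ is even. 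The numerator has degree $2M$ and the denominator degree $M+1$, so $\cL^\lambda(u;m)$ is a polynomial of degree $M-1$ plus a proper rational function; hence for all $n\ge M$, and in particular for $n\ge 2M$ as in the statement, $b^\lambda(n;m)$ equals the $u^n$-coefficient of that proper part, which is governed by its partial-fraction expansion.

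First I would analyze the poles, which are the zeros of the denominator: $u=\pm q^{-1/2}$ from $1-qu^2$, the conjugate pairs $u=q^{-1/2}e^{\mp i\theta_j}$ from the quadratic factors, and, when $M$ is even, $u=1$ from the factor $1-u$. Under GSH the reals $\pi,\theta_1,\dots,\theta_{M'}$ are $\bQ$-linearly independent, so the $\theta_j$ are pairwise distinct modulo $2\pi$ and none lies in $\{0,\pi\}$; this makes all the listed poles simple and pairwise distinct. Moreover each has modulus $q^{-1/2}<1$, while the zeros of $\prod_i(1-u^{2M_i})$ lie on the unit circle, so no cancellation occurs there; and when $M$ is even the numerator vanishes at $u=1$ to order $r\ge1$ whereas the denominator has only a simple zero there, so $\cL^\lambda(u;m)$ is in fact regular at $u=1$ and the constant $C$ in~\eqref{eqn:blambdaevenM} vanishes. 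With this structure the proper part decomposes as $\frac{A_+}{1-\sqrt qu}+\frac{A_-}{1+\sqrt qu}+\sum_{j=1}^{M'}\bigl(\frac{B_j}{1-\sqrt qe^{i\theta_j}u}+\frac{\overline{B_j}}{1-\sqrt qe^{-i\theta_j}u}\bigr)$ with $A_\pm\in\bR$ (since $b^\lambda(n;m)\in\bZ$) and $B_j\in\bC$. Extracting $u^n$-coefficients gives $b^\lambda(n;m)=q^{n/2}\bigl(A_++(-1)^nA_-+\sum_j(B_je^{in\theta_j}+\overline{B_j}e^{-in\theta_j})\bigr)$; each conjugate pair contributes $q^{n/2}\beta_j\sin(n\theta_j+\omega_j)$ for suitable real $\beta_j,\omega_j$ (whose exact values are irrelevant), and setting $\alpha_n=A_++(-1)^nA_-$ (resp.\ $\alpha_n'$ when $M$ is even) gives~\eqref{eqn:blambda} and~\eqref{eqn:blambdaevenM}, with the parity-dependence already apparent.

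It remains to evaluate the residues $A_+=\lim_{u\to q^{-1/2}}(1-\sqrt qu)\cL^\lambda(u;m)$ and $A_-=\lim_{u\to -q^{-1/2}}(1+\sqrt qu)\cL^\lambda(u;m)$. Since $(\pm q^{-1/2})^{2M_i}=q^{-M_i}$, the numerator contributes $\prod_i(1-q^{-M_i})$ in both cases, the complementary linear factor contributes $2$, and the crucial input is
\[
\prod_{j=1}^{M'}(1-2\sqrt q\cos\theta_j u+qu^2)\Big|_{u=\pm q^{-1/2}}=\prod_{j=1}^{M'}\bigl(2\mp 2\cos\theta_j\bigr),
\]
together with the extra factor $1\mp q^{-1/2}$ in the even-$M$ case. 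Using $1\mp\cos\theta\in\{2\sin^2(\theta/2),\,2\cos^2(\theta/2)\}$, the identity $\sin^2(\theta/2)\cos^2(\theta/2)=\tfrac14\sin^2\theta$, and the expansions $\prod_j(1\pm\cos\theta_j)=\sum_{k=0}^{M'}(\pm1)^k e_k^{(M')}$, one obtains closed forms for $A_\pm$ and hence for $\alpha_n$: the combination $\prod_j\cos^2(\theta_j/2)+(-1)^n\prod_j\sin^2(\theta_j/2)$ collapses to $e_\even^{(M')}$ for even $n$ and to $e_\odd^{(M')}$ for odd $n$ once the common factor $2^{1-M'}$ is pulled out, the denominators multiply to $2^{M'}\prod_j\sin^2\theta_j$, and in the even-$M$ case the leftover $\sqrt q/(\sqrt q\mp1)$ recombines into $q/(q-1)$ and $q^{1/2}/(q-1)$, reproducing~\eqref{eqn:alphan} and~\eqref{eqn:alphanevenM}. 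The main obstacle is exactly this final trigonometric reconciliation—turning the half-angle products coming out of the residues into the asserted elementary-symmetric form with denominator $\prod_j\sin^2\theta_j$, and keeping the $q$-prefactors straight (and checking the $u=1$ cancellation) in the even-$M$ case; once the simple-pole structure is secured, which is the one place GSH is genuinely used, the remainder is routine algebra.
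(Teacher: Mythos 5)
Your proposal is correct, and it proves the theorem by a genuinely different route than the paper. You expand the rational function $\cL^\lambda(u;m)=\prod_i(1-u^{2M_i})/\bigl((1-qu^2)\cL(u,\chi_m)\bigr)$ into partial fractions and read off coefficients: GSH guarantees the poles $\pm q^{-1/2}$ and $q^{-1/2}e^{\pm i\theta_j}$ are simple and distinct (with the normalization $\theta_j\in(0,\pi)$ this also rules out coincidences like $\theta_i+\theta_j\in 2\pi\bZ$, which is the one small point worth saying explicitly), the residues at $\pm q^{-1/2}$ produce exactly the parity-dependent constants, and the identity $\prod_j(2\mp2\cos\theta_j)^{-1}=(e_\even^{(M')}\pm e_\odd^{(M')})/(2^{M'}\prod_j\sin^2\theta_j)$ reproduces \eqref{eqn:alphan} and, after combining the $(1\mp q^{-1/2})^{-1}$ factors, \eqref{eqn:alphanevenM}; I checked these residue computations and they match the stated formulas. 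The paper instead proceeds by induction on the partial products of the Euler-type factorization, repeatedly applying the trigonometric summation identities of Lemma~\ref{lem:sinasum} and \eqref{eqn:sinadd}; that route is more laborious but yields recursive expressions for the amplitudes $\beta_j$ (which the examples in Section~\ref{sec:examples} actually use), whereas your argument treats $\beta_j,\omega_j$ as unspecified residue data, which is all the theorem requires. Your approach even gives slightly more: validity already for $n\ge M$ rather than $n\ge 2M$, and in the even-$M$ case the observation that the numerator kills the pole of $1/\cL(u,\chi_m)$ at $u=1$, so one may take $C=0$ in \eqref{eqn:blambdaevenM} (this is consistent with, and sharper than, the paper's unevaluated constant $C$, as the degree-$2$ example $b^\lambda(n;m)=(q+1)q^{\lfloor n/2\rfloor-1}$ confirms).
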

\begin{proof}
    Assume \(M\) is odd. By Theorem \ref{thm:Lfuncquad} and Corollary \ref{cor:Llambda}, we have
    \[
        \cL^\lambda(q^{-\frac{1}{2}}u;m) = \frac{\prod_{i=1}^{r} (1 - q^{-M_i}u^{2M_i})}{1 - u^2} \prod_{j=1}^{M'} \left( \sum_{n \ge 0} \frac{\sin((n+1)\theta_j)}{\sin\theta_j} u^n\right).
    \]
    Consider the partial product
    \begin{equation}
        \cL^\lambda_{t, N}(u;m) = \frac{\prod_{i=1}^{t}(1 - q^{-M_i}u^{2M_i})}{1 - u^2} \prod_{j=1}^{N} \left( \sum_{n \ge 0} \frac{\sin((n+1)\theta_j)}{\sin\theta_j} u^n\right) = \sum_{n \ge 0} b_{t, N}^\lambda(n;m) u^n.
    \end{equation}
    for \(1 \le t \le r\) and \(N \ge 1\).
    Using induction on \(t\) and \(N\), we will prove that \(b_{t, N}^\lambda(n;m)\) has a form of
    \begin{equation}
    \label{eqn:bN}
        b_{t, N}^\lambda(n;m) = \alpha_{t,N,n} + \sum_{j=1}^{N} \beta_{j,t,N} \sin(n \theta_{j} + \omega_{j,t,N})
    \end{equation}
    for \(n \ge 2M_1 + \cdots + 2M_t\), where \(\beta_{j, t, N}, \omega_{j, t, N} \in \bR\) do not depend on \(n\) and
    \begin{equation}
    \label{eqn:alphaN}
        \alpha_{t,N,n} = \frac{\prod_{i=1}^{t} (1 - q^{-M_i})}{2^{N} \prod_{j=1}^{N} \sin^2 \theta_j} \times\begin{cases}
            e_0^{(N)} + e_2^{(N)} + e_4^{(N)} + \cdots & \text{if }2 \mid n \\
            e_1^{(N)} + e_3^{(N)} + e_5^{(N)} + \cdots & \text{if }2 \nmid n
        \end{cases}
    \end{equation}
    where \(e_k^{(N)}\) is a \(k\)-th elementary symmetric polynomial in \(N\) variables \(\cos \theta_1, \dots, \cos \theta_N\).

    When \(t = N = 1\),
    \begin{align*}
        \cL^\lambda_{1,1}(u;m) &= \frac{1 - q^{-M_1}u^{2M_1}}{1 - u^2} \sum_{n \ge 0} \frac{\sin((n+1)\theta_1)}{\sin \theta_1} u^n \\
        &= (1 - q^{-M_1}u^{2M_1}) \left[ \sum_{k \ge 0} \frac{(\sin \theta_1 + \sin (3\theta_1) + \cdots + \sin ((2k+1)\theta_1)}{\sin \theta_1} u^{2k} \right. \\
        &\quad+\left. \frac{(\sin (2 \theta_1) + \sin(4\theta_1) + \cdots + \sin((2k+2)\theta_1)}{\sin\theta_1} u^{2k + 1}\right] \\
        &= (1 - q^{-M_1}u^{2M_1}) \left[ \sum_{k \ge 0} \left(\frac{1 - \cos((2k+2)\theta_1)}{2\sin^2 \theta_1}\right) u^{2k} + \left(\frac{\cos \theta_1 - \cos((2k+3)\theta_1)}{2\sin^2 \theta_1}\right) u^{2k+1}  \right]
    \end{align*}
    so for \(n \ge 2M_1\), we have
    \[
        b_{1,1}^\lambda(n;m) = \begin{cases}
            \dfrac{1 - q^{-M_1} - \cos((n+2)\theta_1) + q^{-M_1} \cos((n-2M_1+2)\theta_1)}{2\sin^2\theta_1} & \text{if }2 \mid n \\
            \dfrac{(1 - q^{-M_1}) \cos \theta_1 - \cos((n+2)\theta_1) + q^{-M_1} \cos((n-2M_1+2)\theta_1)}{2\sin^2\theta_1} & \text{if }2 \nmid n
        \end{cases}
    \]
    and this can be rewritten as a form of
    \[
        b_{1,1}^\lambda(n;m) = \begin{cases}
            \frac{1 - q^{-M_1}}{2 \sin^2\theta_1} + \beta_{1, 1, 1} \sin (n \theta_{1} + \omega_{1, 1, 1}) & \text{if }2 \mid n \\
            \frac{1 - q^{-M_1}}{2 \sin^2\theta_1} \cdot \cos \theta_1 + \beta_{1, 1, 1} \sin (n \theta_{1} + \omega_{1, 1, 1}) & \text{if }2 \nmid n
        \end{cases}
    \]
    where
    \[
        \beta_{1, 1, 1} = \frac{\sqrt{1 - 2 q^{-M_1} \cos(M_1\theta_1) + q^{-2M_1}}}{2 \sin^2 \theta_1}.
    \]
    Now, assume that \eqref{eqn:bN} and \eqref{eqn:alphaN} holds for some \(t < r\) and \(N < M'\).
    First, we show that it holds for \(t\) and \(N+1\).
    By the induction hypothesis, we have
    \begin{align*}
        \cL_{t,N+1}^\lambda(u;m) &= \cL_{t,N}^\lambda(u;m) \sum_{n \ge 0} \frac{\sin((n+1)\theta_{N+1})}{\sin \theta_{N+1}} u^n \\
        &= \left(\sum_{n \ge 0} b_{t,N}^\lambda(n;m) u^n\right)\left(\sum_{n \ge 0} \frac{\sin((n+1)\theta_{N+1})}{\sin \theta_{N+1}} u^n\right)
    \end{align*}
    and
    \begin{align*}
        b_{t,N+1}^\lambda(n;m) &= \sum_{k=0}^{n} b_{t,N}^\lambda(k;m) \cdot \frac{\sin((n+1-k)\theta_{N+1})}{\sin\theta_{N+1}} \\
        &=\sum_{k=0}^{n} \left(\alpha_{t,N,k} + \sum_{j=1}^{N} \beta_{j,t,N} \sin(k\theta_{j} + \omega_{j,t,N})\right) \frac{\sin((n+1-k)\theta_{N+1})}{\sin \theta_{N+1}}.
    \end{align*}
    We have
    \begin{align*}
        &\sin(k\theta_{j} + \omega_{j,t,N}) \sin((n+1-k) \theta_{N+1}) \\
        &= \frac{1}{2} (\cos(k(\theta_{j} + \theta_{N+1})  - (n+1)\theta_{N+1}+ \omega_{j, t, N}) - \cos(k(\theta_{j} - \theta_{N+1}) + (n+1) \theta_{N+1} + \omega_{j, t, N}))
    \end{align*}
    and the summation over the product of sines becomes the sum of
    \begin{align*}
        &\frac{\beta_{j,t,N}}{2 \sin \theta_{N+1}} \sum_{k=0}^{n} \cos(k(\theta_{j} + \theta_{N+1}) - (n+1)\theta_{N+1} + \omega_{j, t, N}) - \cos(k(\theta_{j} - \theta_{N+1}) + (n+1) \theta_{N+1} + \omega_{j, t, N}) \\
        &= \frac{\beta_{j,t,N}}{4\sin\theta_{N+1}} \left[\frac{\sin\left(n \theta_j + \frac{\theta_j - \theta_{N+1}}{2} + \omega_{j,t,N}\right) + \sin\left(n\theta_{N+1} + \frac{\theta_j + 3 \theta_{N+1}}{2} - \omega_{j,t,N}\right)}{\sin\left(\frac{\theta_{j} + \theta_{N+1}}{2}\right)}\right. \\
        &\quad \left. - \frac{\sin\left(n\theta_j + \frac{\theta_j + \theta_{N+1}}{2} + \omega_{j,t,N}\right) - \sin\left(n\theta_{N+1} - \frac{\theta_j - 3 \theta_{N+1}}{2} + \omega_{j,t,N}\right)}{\sin\left(\frac{\theta_{j} - \theta_{N+1}}{2}\right)}\right]
    \end{align*}
    for \(j = 1, \dots, M'\).
    These (four) terms can be subsumed into the oscilating terms \(\beta_{j,t,N+1} \sin(n\theta_{j} + \omega_{j,t,N+1})\) and \(\beta_{N+1,t,N+1} \sin (n\theta_{N+1} + \omega_{N+1,t,N+1})\) of \eqref{eqn:bN} for \(N+1\), by applying the identity
    \begin{equation}
        \label{eqn:sinadd}
        \begin{aligned}
        a \sin(\theta + \alpha) + b \sin(\theta + \beta) &= c \sin(\theta + \gamma),\quad \text{where} \\
        c = \sqrt{a^2 + b^2 + 2ab \cos(\beta - \alpha)},\quad \gamma &= \tan^{-1}\left(\frac{a \sin \alpha + b \sin \beta}{a \cos \alpha + b \cos \beta}\right)
        \end{aligned}
    \end{equation}
    repeatedly.
    For the remaining sum over \(\alpha_{t,N,k} \frac{\sin ((n+1-k) \theta_{N+1})}{\sin\theta_{N+1}}\), we consider the cases when \(n\) is even or odd separately.
    Let \(e_{\even}^{(N)} = \sum_{0 \le k \le \frac{N}{2}} e_{2k}^{(N)} \) and \(e_{\odd}^{(N)} = \sum_{0 \le k \le \frac{N-1}{2}} e_{2k+1}^{(N)}\).
    If \(2 \mid n\) and \(n = 2n'\), 
    \begin{align*}
        &\sum_{k=0}^{n} \alpha_{N,k} \frac{\sin ((n+1-k) \theta_{N+1})}{\sin\theta_{N+1}} \\
        &= \sum_{k=0}^{n'} \alpha_{N,2k} \frac{\sin((n+1-2k) \theta_{N+1})}{\sin\theta_{N+1}} + \sum_{k=0}^{n'-1} \alpha_{N,2k+1} \frac{\sin((n-2k') \theta_{N+1})}{\sin\theta_{N+1}} \\
        &= \frac{\prod_{i=1}^{t}(1 - q^{-M_i}) e_{\even}^{(N)}}{2^N \prod_{j=1}^{N} \sin^2 \theta_j} \sum_{k=0}^{n'} \frac{\sin((2n'+1-2k)\theta_{N+1})}{\sin\theta_{N+1}} + \frac{\prod_{i=1}^{t}(1 - q^{-M_i}) e_{\odd}^{(N)}}{2^N \prod_{j=1}^{N} \sin^2 \theta_j} \sum_{k=0}^{n'} \frac{\sin((2n'-2k)\theta_{N+1})}{\sin\theta_{N+1}} \\
        &= \frac{\prod_{i=1}^{t}(1 - q^{-M_i}) e_{\even}^{(N)}}{2^{N+1} \prod_{j=1}^{N+1} \sin^2 \theta_j}(1 - \cos((2n'+2)\theta_{N+1})) + \frac{\prod_{i=1}^{t}(1 - q^{-M_i}) e_{\odd}^{(N)}}{2^{N+1} \prod_{j=1}^{N+1} \sin^2 \theta_j} (\cos \theta_{N+1} - \cos((2n'+1)\theta_{N+1})) \\
        &= \frac{\prod_{i=1}^{t}(1 - q^{-M_i})e_{\even}^{(N)}}{2^{N+1} \prod_{j=1}^{N+1} \sin^2 \theta_j}(1 - \cos((n+2)\theta_{N+1})) + \frac{\prod_{i=1}^{t}(1 - q^{-M_i}) e_{\odd}^{(N)}}{2^{N+1} \prod_{j=1}^{N+1} \sin^2 \theta_j} (\cos \theta_{N+1} - \cos((n+1)\theta_{N+1})) \\
        &= \frac{\prod_{i=1}^{t}(1 - q^{-M_i})(e_{\even}^{(N)} + e_{\odd}^{(N)} \cos \theta_{N+1})}{2^{N+1} \prod_{j=1}^{N+1} \sin^2 \theta_j} + \beta \sin(n \theta_{N+1} + \omega), 
    \end{align*}
    for some \(\beta, \omega \in \bR\) (use \eqref{eqn:sinadd}).
    Similarly, for \(n = 2n' + 1\) we have
    \begin{align*}
        &\sum_{k=0}^{n} \alpha_{N,k} \frac{\sin ((n+1-k) \theta_{N+1})}{\sin\theta_{N+1}} \\
        &= \sum_{k=0}^{n'} \alpha_{N,2k} \frac{\sin((n+1-2k) \theta_{N+1})}{\sin\theta_{N+1}} + \sum_{k=0}^{n'} \alpha_{N,2k+1} \frac{\sin((n-2k') \theta_{N+1})}{\sin\theta_{N+1}} \\
        &= \frac{\prod_{i=1}^{t}(1 - q^{-M_i}) e_{\even}^{(N)}}{2^N \prod_{j=1}^{N} \sin^2 \theta_j} \sum_{k=0}^{n'} \frac{\sin((2n'+2-2k)\theta_{N+1})}{\sin\theta_{N+1}} + \frac{\prod_{i=1}^{t}(1 - q^{-M_i}) e_{\odd}^{(N)}}{2^N \prod_{j=1}^{N} \sin^2 \theta_j} \sum_{k=0}^{n'} \frac{\sin((2n'+1-2k)\theta_{N+1})}{\sin\theta_{N+1}} \\
        &= \frac{\prod_{i=1}^{t}(1 - q^{-M_i}) e_{\even}^{(N)}}{2^{N+1} \prod_{j=1}^{N+1} \sin^2 \theta_j}(\cos\theta_{N+1} - \cos((2n'+3)\theta_{N+1})) + \frac{\prod_{i=1}^{t}(1 - q^{-M_i}) e_{\odd}^{(N)}}{2^{N+1} \prod_{j=1}^{N+1} \sin^2 \theta_j} (1 - \cos((2n'+2)\theta_{N+1})) \\
        &= \frac{\prod_{i=1}^{t}(1 - q^{-M_i})e_{\even}^{(N)}}{2^{N+1} \prod_{j=1}^{N+1} \sin^2 \theta_j}(\cos\theta_{N+1} - \cos((n+2)\theta_{N+1})) + \frac{\prod_{i=1}^{t}(1 - q^{-M_i}) e_{\odd}^{(N)}}{2^{N+1} \prod_{j=1}^{N+1} \sin^2 \theta_j} (1 - \cos((n+1)\theta_{N+1})) \\
        &= \frac{\prod_{i=1}^{t}(1 - q^{-M_i})(e_{\even}^{(N)} \cos\theta_{N+1} + e_{\odd}^{(N)})}{2^{N+1} \prod_{j=1}^{N+1} \sin^2 \theta_j} + \beta \sin(n \theta_{N+1} + \omega), 
    \end{align*}
    for some \(\beta, \omega \in \bR\).
    Now \eqref{eqn:alphaN} for \(N+1\) follows from
    \[
        e_\even^{(N)} + e_\odd^{(N)} \cos \theta_{N+1} = e_\even^{(N+1)}, \quad e_\even^{(N)} \cos \theta_{N+1} + e_\odd^{(N)} = e_\odd^{(N+1)}.
    \]

    Now, let's do induction on \(t\). By the induction hypothesis, we have
    \begin{align*}
        \cL_{t+1,N}^{\lambda}(u;m) &= (1 - q^{-M_{t+1}}u^{2M_{t+1}})\cL_{t,N}^{\lambda}(u;m) \\
        &=(1 - q^{-M_{t+1}}u^{2M_{t+1}}) \sum_{n \ge 0} b_{t,N}^\lambda(n;m) u^n
    \end{align*}
    so for \(n \ge 2M_1 + \cdots + 2M_t + 2M_{t+1}\), we have
    \begin{align*}
        b_{t+1,N}^\lambda(n;m) &= b_{t,N}^\lambda(n;m) - q^{-M_{t+1}} b_{t,N}^\lambda(n-2M_{t+1};m) \\
        &= \alpha_{t, N, n} - q^{-M_{t+1}} \alpha_{t, N, n-2M_{t+1}}\\
        &\quad + \sum_{j=1}^{N} \left(\beta_{j,t,N} \sin(n\theta_j + \omega_{j,t,N}) - q^{-M_{t+1}} \beta_{j,t,N} \sin((n-2M_{t+1})\theta_j + \omega_{j,t,N})\right).
    \end{align*}
    Using \eqref{eqn:sinadd}, we can combine the sine terms into a single sine term \(\beta_{j,t+1,N} \sin(n\theta_j + \omega_{j,t+1,N})\) for some \(\beta_{j,t+1,N}, \omega_{j,t+1,N} \in \bR\).
    Since \(n\) and \(n - 2M_{t+1}\) have the same parity, we have \(\alpha_{t,N,n} = \alpha_{t,N,n-2M_{t+1}}\) and obtain
    \[
    \alpha_{t+1,N,n} = (1 - q^{-M_{t+1}}) \alpha_{t,N,n} = \frac{\prod_{i=1}^{t+1} (1 - q^{-M_i})}{2^{N} \prod_{j=1}^{N} \sin^2 \theta_j} \times\begin{cases}
        e_\even^{(N)} & \text{if }2 \mid n \\
        e_\odd^{(N)} & \text{if }2 \nmid n.
    \end{cases}
    \]

    For even \(M\), we have an extra factor \(\frac{1}{1-q^{-1/2}u}\) in \(\cL^\lambda(q^{-\frac{1}{2}}u;m)\). From the previous computations for odd \(M\), we have
    \begin{align*}
        \cL^\lambda(q^{-\frac{1}{2}}u;m) &= \frac{1}{1 - q^{-\frac{1}{2}}u} \sum_{n \ge 1} b_{M'}^\lambda(n;m) u^n = \sum_{n \ge 0} \left(\sum_{k=0}^{n} q^{-\frac{n-k}{2}} b^\lambda_{M'}(k;m)\right) u^n = \sum_{n\ge 0} q^{-\frac{n}{2}} b^\lambda(n;m)u^n
    \end{align*}
    and
    \begin{equation}
        \label{eqn:blambdasumevenM}
        b^\lambda(n;m) = \sum_{k=0}^{n} q^{\frac{k}{2}} b^\lambda_{M'}(k;m) = \sum_{k=0}^{n} q^{\frac{k}{2}} \alpha_k + \sum_{j=1}^{M'} \beta_j \sum_{k=0}^{n} q^{\frac{k}{2}} \sin(k\theta_j + \omega_j)
    \end{equation}
    for \(\beta_j = \beta_{j,r,M'}\) and \(\omega_j = \omega_{j,r,M'}\).
    For the first sum, one can use \eqref{eqn:alphaN}.
    We consider two cases when \(n\) is even or odd separately.
    For even \(n\),
    \begin{align*}
        \sum_{k=0}^{n} q^{\frac{k}{2}} \alpha_k &= \frac{\prod_{i=1}^{r}(1 - q^{-M_i})e_{\even}^{(M')}}{2^{M'} \prod_{j=1}^{M'} \sin^2\theta_j}\sum_{k=0}^{\frac{n}{2}} q^k + \frac{\prod_{i=1}^{r}(1 - q^{-M_i})e_{\odd}^{(M')}}{2^{M'} \prod_{j=1}^{M'} \sin^2\theta_j}\sum_{k=0}^{\frac{n}{2}-1} q^{\frac{2k+1}{2}} \\
        &=\frac{\prod_{i=1}^{t}(1 - q^{-M_i})}{2^{M'} \prod_{j=1}^{M'} \sin^2\theta_j} \left(e_{\even}^{(M')} \cdot \frac{q^{\frac{n}{2}+1}-1}{q - 1} + e_{\odd}^{(M')} \cdot \frac{q^{\frac{n+1}{2}}-q^{\frac{1}{2}}}{q - 1} \right)
    \end{align*}
    and similarly for odd \(n\),
    \begin{align*}
        \sum_{k=0}^{n} q^{\frac{k}{2}} \alpha_k &= \frac{\prod_{i=1}^{r}(1 - q^{-M_i})e_{\even}^{(M')}}{2^{M'} \prod_{j=1}^{M'} \sin^2\theta_j}\sum_{k=0}^{\frac{n-1}{2}} q^k + \frac{\prod_{i=1}^{r}(1 - q^{-M_i})e_{\odd}^{(M')}}{2^{M'} \prod_{j=1}^{M'} \sin^2\theta_j}\sum_{k=0}^{\frac{n-1}{2}} q^{\frac{2k+1}{2}} \\
        &=\frac{\prod_{i=1}^{r}(1 - q^{-M_i})}{2^{M'} \prod_{j=1}^{M'} \sin^2\theta_j} \left(e_{\even}^{(M')} \cdot \frac{q^{\frac{n+1}{2}}-1}{q - 1} + e_{\odd}^{(M')} \cdot \frac{q^{\frac{n}{2}+1}-q^{\frac{1}{2}}}{q - 1} \right).
    \end{align*}
    The second summation can be simplified using \eqref{eqn:sinasum} and \eqref{eqn:sinadd} as
    \[
        C' + q^{\frac{n}{2}}\sum_{j=1}^{M'} \beta_j' \sin(n\theta_j + \omega_j')
    \]
    where
    \[
        \beta_j' = \beta_j \cdot \frac{1}{\sqrt{1 - 2 q^{-\frac{1}{2}} \cos\theta_j + q^{-1}}}
    \]
    and some \(\omega_j' \in \bR\),
    \begin{align*}
        C' = \sum_{j=1}^{M'} \beta_j \frac{\sin \omega_j - q^{\frac{1}{2}} \sin(\omega_j - \theta_j)}{1 - 2 q^{\frac{1}{2}}\cos\theta_j + q}.
    \end{align*}
    By combining the sums, we get \eqref{eqn:blambdaevenM} with
    \[
        C = C' - \frac{\prod_{i=1}^{r}(1 - q^{-M_i})}{2^{M'} \prod_{j=1}^{M'} \sin^2\theta_j}\left(
            \frac{e_\even^{(M')}}{q - 1} + \frac{e_\odd^{(M')} q^{\frac{1}{2}}}{q - 1}\right).
    \]
\end{proof}

\begin{corollary}
\label{cor:mainlambda}
    Assume that \(\cL(u, \chi_m)\) satisfies GSH.
    Let
    \[
    C_m = \frac{\prod_{i=1}^{r}(1 - q^{-M_i})}{2^{M'} \prod_{j=1}^{M'} \sin^2\theta_j}.
    \]
    Let \(\dd \phi\) be the normalized Haar measure on \(\bT^{M'}\).
    \begin{enumerate}
        \item Assume \(M\) is odd. For \(\beta_1, \dots, \beta_{M'}\) in \eqref{eqn:blambda}, define \(g : \bR^{M'} \to \bR\) as \(g(\mathbf{y}) = \sum_{j=1}^{M'} \beta_j \sin(y_j)\) and \(G : \bR^{M'} \to \bR\) as \(G(\mathbf{y}) = \sum_{j=1}^{M'} \beta_j' \sin(y_j)\). Then
        \begin{align}
            \delta_+^{\lambda, \nc}(m) &= \frac{1}{2} + \frac{1}{2} \dd \phi(\{C_m e_{\even}^{(M')} > g(\mathbf{y}) > - C_m e_{\odd}^{(M')}\}) \label{eqn:deltalambdaoddnc} \\
            \delta_+^{\lambda}(m) &= \frac{1}{2} + \frac{1}{2} \dd \phi\left(\left\{C_m \left( \frac{q e_{\even}^{(M')} + q^{\frac{1}{2}} e_{\odd}^{(M')}}{q-1}  \right)> G(\mathbf{y}) > - C_m \left(\frac{q^{\frac{1}{2}} e_{\even}^{(M')} + q e_{\odd}^{(M')}}{q-1}\right)\right\}\right) \label{eqn:deltalambdaodd}
        \end{align}
        where both are strictly greater than \(\frac{1}{2}\).
        \item Assume \(M\) is even.  For \(\beta_1, \dots, \beta_{M'}\) in \eqref{eqn:blambdaevenM}, define \(g\) and \(G\) similarly as above. Then
        % \begin{small}
        \begin{align}
            &\delta_+^{\lambda, \nc}(m) \nonumber \\
            &= \frac{1}{2} + \frac{1}{2} \dd \phi\left(\left\{C_m \left( \frac{q e_{\even}^{(M')} + q^{\frac{1}{2}} e_{\odd}^{(M')}}{q-1} \right)> g(\mathbf{y}) > - C_m \left(\frac{q^{\frac{1}{2}} e_{\even}^{(M')} + q e_{\odd}^{(M')}}{q-1}\right)\right\}\right) \label{eqn:deltalambdaevennc}\\
            &\delta_+^{\lambda}(m) \nonumber \\
            &= \frac{1}{2} + \frac{1}{2} \dd \phi\left(\left\{C_m \left( \frac{(q^2 + q) e_{\even}^{(M')} + 2 q^{\frac{3}{2}} e_{\odd}^{(M')}}{(q-1)^{2}} \right)> G(\mathbf{y}) > - C_m \left(\frac{2q^{\frac{3}{2}} e_{\even}^{(M')} + (q^2 +q) e_{\odd}^{(M')}}{(q-1)^{2}}\right)\right\}\right) \label{eqn:deltalambdaeven}
        \end{align}
        % \end{small}
    \end{enumerate}
\end{corollary}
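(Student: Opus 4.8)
The plan is to feed the closed forms for $b^\lambda(n;m)$ from Theorem~\ref{thm:blambdabias} (summed over $k\le n$, for the cumulative statements) into the Kronecker--Weyl machinery of Corollary~\ref{cor:sindensity}, and to extract the strict inequality $>\tfrac12$ from the sign relations among $e_{\even}^{(M')}$ and $e_{\odd}^{(M')}$.

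\emph{Reduction by parity.} For odd $M$, \eqref{eqn:blambda}--\eqref{eqn:alphan} give $b^\lambda(n;m)=q^{n/2}\bigl(\alpha_n+\sum_{j}\beta_j\sin(n\theta_j+\omega_j)\bigr)$ with $\alpha_n$ equal to $c_{\even}:=C_m e_{\even}^{(M')}$ if $n$ is even and to $c_{\odd}:=C_m e_{\odd}^{(M')}$ if $n$ is odd; for even $M$, \eqref{eqn:blambdaevenM}--\eqref{eqn:alphanevenM} add a constant $C$, and dividing by $q^{n/2}$ turns it into $Cq^{-n/2}\to0$. Hence, on a fixed residue class of $n$ modulo $2$, the event $b^\lambda(n;m)>0$ is, after dividing by $q^{n/2}$, the event $\sum_j\beta_j\sin(n\theta_j+\omega_j)>x_n$ with $x_n\to0$. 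Writing $n=2k$ or $n=2k+1$, the arguments become $2k\theta_j$ plus a fixed phase, and $\{\pi,2\theta_1,\dots,2\theta_{M'}\}$ is $\bQ$-linearly independent whenever $\{\pi,\theta_1,\dots,\theta_{M'}\}$ is (i.e.\ under GSH), so Corollary~\ref{cor:sindensity} applies on $\bT^{M'}$ with angles $2\theta_j$ and gives, for each parity, the density $\dd\phi(\{g(\mathbf y)>-c_{\even}\})$, resp.\ $\dd\phi(\{g(\mathbf y)>-c_{\odd}\})$, where $g(\mathbf y)=\sum_j\beta_j\sin y_j$; averaging the two parities with weight $\tfrac12$ yields $\delta_+^{\lambda,\nc}(m)=\tfrac12\dd\phi(\{g>-c_{\even}\})+\tfrac12\dd\phi(\{g>-c_{\odd}\})$. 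For the cumulative case one writes $B^\lambda(n;m)=\sum_{k\le n}q^{k/2}\alpha_k+\sum_j\beta_j\sum_{k\le n}q^{k/2}\sin(k\theta_j+\omega_j)\;(+\,nC)$: the first term is a parity-split geometric series whose dominant part is $q^{n/2}$ times a parity-dependent constant --- these are the $\tfrac{C_m(qe_{\even}^{(M')}+q^{1/2}e_{\odd}^{(M')})}{q-1}$, resp.\ $\tfrac{C_m((q^2+q)e_{\even}^{(M')}+2q^{3/2}e_{\odd}^{(M')})}{(q-1)^2}$, appearing in \eqref{eqn:deltalambdaodd}, \eqref{eqn:deltalambdaeven} --- the second term is handled by Lemma~\ref{lem:sinasum} (with $a=q^{1/2}$) and \eqref{eqn:sinadd}, with dominant part $q^{n/2}\sum_j\beta_j'\sin(n\theta_j+\omega_j')$, $\beta_j'=q^{1/2}\beta_j/\sqrt{1-2q^{1/2}\cos\theta_j+q}$, and the remainder (constant, or $nC$) is $O(nq^{-n/2})\to0$; Corollary~\ref{cor:sindensity} applies again with $g$ replaced by $G(\mathbf y)=\sum_j\beta_j'\sin y_j$, giving $\delta_+^{\lambda}(m)=\tfrac12\dd\phi(\{G>-c_{\even}\})+\tfrac12\dd\phi(\{G>-c_{\odd}\})$ with the corresponding constants $c_{\even},c_{\odd}$.

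\emph{Assembling via symmetry.} The pushforward of $\dd\phi$ under $g$ (resp.\ $G$) is invariant under $\mathbf y\mapsto-\mathbf y$, so $\dd\phi(\{g>-c_{\odd}\})=\dd\phi(\{g<c_{\odd}\})$. In all four cases $c_{\even}+c_{\odd}>0$ --- by \eqref{eqn:eeven}--\eqref{eqn:eodd} this reduces to $e_{\even}^{(M')}+e_{\odd}^{(M')}=\prod_j(1+\cos\theta_j)>0$, valid under GSH since no $\theta_j\equiv0\pmod\pi$ --- so $\{g>-c_{\even}\}\cup\{g<c_{\odd}\}=\bT^{M'}$ and the two masses sum to $1+\dd\phi(\{-c_{\even}<g<c_{\odd}\})$; using $\mathbf y\mapsto-\mathbf y$ once more to flip the interval gives $\delta_+^{\bullet}(m)=\tfrac12+\tfrac12\dd\phi(\{-c_{\odd}<g<c_{\even}\})$, which is \eqref{eqn:deltalambdaoddnc}--\eqref{eqn:deltalambdaeven}. (Each $\beta_j\ne0$, see below, so $g$ has a non-atomic distribution and $\delta_0^{\lambda}=\delta_0^{\lambda,\nc}=0$.)

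\emph{Strictness (the main obstacle).} It remains to show $\dd\phi(\{-c_{\odd}<g<c_{\even}\})>0$. The interval is nonempty ($c_{\even}+c_{\odd}>0$) and $c_{\even}>0$ in every case --- this uses $e_{\even}^{(M')}>|e_{\odd}^{(M')}|$, which under GSH follows from $e_{\even}^{(M')}\pm e_{\odd}^{(M')}=\prod_j(1\pm\cos\theta_j)>0$, together with $q>1$. Since the pushforward of $\dd\phi$ under $g$ is supported on $[-\sum_j|\beta_j|,\sum_j|\beta_j|]$ and nonempty open subsets of $\bT^{M'}$ have positive Haar measure, it is enough that the interval meet this support, i.e.\ $-c_{\odd}<\sum_j|\beta_j|$; if the combination defining $c_{\odd}$ is $\ge0$ this is automatic ($0$ is then in the interval), and otherwise one needs a lower bound on $\sum_j|\beta_j|$. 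That bound I would obtain from the partial-fraction expansion of $\cL^\lambda(u;m)$ (Corollary~\ref{cor:Llambda}, Theorem~\ref{thm:Lfuncquad}): under GSH the poles at $q^{-1/2}e^{\pm i\theta_j}$ are simple and not cancelled by the factor $\prod_i(1-u^{2M_i})$ (whose zeros lie on $|u|=1$), so the amplitude satisfies $\beta_j>\frac{\prod_i(1-q^{-M_i})}{2^{M'}\sin^2\theta_j\prod_{k\ne j}|\cos\theta_j-\cos\theta_k|}$ (strictly, since $\cos(2M_i\theta_j)<1$ under GSH); summing these and using the residue identity $\sum_j\frac{\prod_{k\ne j}(1-\cos^2\theta_k)}{\prod_{k\ne j}(\cos\theta_j-\cos\theta_k)}=e_{\even}^{(M')}$ or $e_{\odd}^{(M')}$ (depending on the parity of $M'$) gives $\sum_j|\beta_j|>C_m|e_{\odd}^{(M')}|\ge|c_{\odd}|$ in the non-cumulative odd-$M$ case, and the analogue after inserting the $q$-factors in the other three. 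Carrying out this amplitude estimate cleanly, and pinning down the parity-dependent constants of the previous step so that they match \eqref{eqn:deltalambdaoddnc}--\eqref{eqn:deltalambdaeven} verbatim, is the main work; the rest reprises the geometric-series and product-to-sum identities already used for Theorem~\ref{thm:blambdabias}.
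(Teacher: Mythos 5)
Your derivation of the four density identities follows essentially the paper's own route: split by the parity of $n$, apply Corollary~\ref{cor:sindensity} with a threshold $x_n\to 0$ absorbing the constant $C$ and lower-order terms (your remark that $\{\pi,2\theta_1,\dots,2\theta_{M'}\}$ inherits $\bQ$-linear independence is a correct point the paper leaves implicit), sum the parity-split geometric series to get the constants $C_m\bigl(qe_{\even}^{(M')}+q^{1/2}e_{\odd}^{(M')}\bigr)/(q-1)$ etc.\ and the amplitudes $\beta_j'=\beta_j/\sqrt{1-2q^{-1/2}\cos\theta_j+q^{-1}}$ (your expression is the same after clearing $q^{1/2}$), and then use the odd symmetry of $g$ together with $e_{\even}^{(M')}>0$ and $e_{\even}^{(M')}+e_{\odd}^{(M')}=\prod_j(1+\cos\theta_j)>0$ to fold the two parity densities into the two-sided conditions \eqref{eqn:deltalambdaoddnc}--\eqref{eqn:deltalambdaeven}. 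This matches the paper's proof, which does the same bookkeeping but phrases the final inclusion--exclusion as a case split on the sign of $e_{\odd}^{(M')}$ rather than via your union argument.

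The gap is the strictness claim in part (1), which in your write-up is explicitly deferred: you reduce it to a lower bound of the shape $\sum_j|\beta_j|>C_m|e_{\odd}^{(M')}|$ (with $q$-weighted analogues for the cumulative case), propose to obtain it from a partial-fraction amplitude bound plus a divided-difference identity, and then state that carrying this out ``is the main work'' --- so, as submitted, the inequality $>\tfrac12$ is not proved. For comparison, the paper extracts strictness directly from the two positivity facts above: when $e_{\odd}^{(M')}\ge 0$ the interval in \eqref{eqn:deltalambdaoddnc} contains a neighborhood of $0$, so its preimage under the continuous function $g$ is a nonempty open set of positive Haar measure, and no amplitude information is needed. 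Your extra concern is only live when $e_{\odd}^{(M')}<0$ (resp.\ when $q^{1/2}e_{\even}^{(M')}+qe_{\odd}^{(M')}<0$ in the cumulative case), where the interval avoids $0$ and one must indeed check that it meets the range of $g$ (resp.\ $G$); that subtlety is real and the paper passes over it silently, and your sketched divided-difference identity (giving $e_{\even}^{(M')}$ or $e_{\odd}^{(M')}$ according to the parity of $M'$) is correct as far as I can verify. But neither the per-$j$ residue lower bound on $\beta_j$ nor its $q$-weighted versions needed for \eqref{eqn:deltalambdaodd} are actually established in the proposal, so the ``strictly greater than $\tfrac12$'' part of the statement remains unproven in your argument; to match the paper you should either supply that estimate in full or handle the case $e_{\odd}^{(M')}\ge 0$ by the neighborhood-of-zero argument and the remaining case by the completed amplitude bound.
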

\begin{proof}
    Let \(\delta_{+,\even}^{\lambda, \nc}(m), \delta_{+,\odd}^{\lambda, \nc}(m)\) be the non-cumulative \(\lambda\)-densities for even and odd numbers, respectively, i.e.
    \begin{align*}
        \delta_{+,\even}^{\lambda, \nc}(m) := \lim_{n \to \infty}\frac{\#\{1 \le k \le n : b^\lambda(k; m) > 0, \,2 \mid k\}}{n/2}, \\
        \delta_{+,\odd}^{\lambda, \nc}(m) := \lim_{n \to \infty}\frac{\#\{1 \le k \le n : b^\lambda(k; m) > 0, \,2 \nmid k\}}{n/2}, \\
    \end{align*}
    so that \(\delta_+^{\lambda, \nc} = \frac{1}{2} (\delta_{+,\even}^{\lambda, \nc} + \delta_{+,\odd}^{\lambda, \nc})\).
    Assume \(M\) is odd and let \(M' = \frac{M-1}{2}\).
    Since \(\theta_1, \dots, \theta_{M'}\) are \(\bQ\)-linearly independent, we can apply Corollary \ref{cor:sindensity} to \eqref{eqn:blambda} to compute these densities as
    \begin{align*}
        \delta_{+,\even}^{\lambda, \nc}(m) &= \dd \phi(\{\mathbf{y} \in \bT^{M'} : g(\mathbf{y}) > - C_m \cdot e_{\even}^{(M')}\}) \\
        \delta_{+,\odd}^{\lambda, \nc}(m) &= \dd \phi(\{\mathbf{y} \in \bT^{M'} : g(\mathbf{y}) > - C_m \cdot e_{\odd}^{(M')}\}).
    \end{align*}
    Since \(g(-\mathbf{y}) = -g(\mathbf{y})\) and the hypersurface \(\{\mathbf{y} \in \bT^{M'} : g(\mathbf{y}) = 0\}\) has measure zero, both \(\{g(\mathbf{y}) > 0\}\) and \(\{g(\mathbf{y}) < 0\}\) have measure \(\frac{1}{2}\).
    Also, we have
    \begin{align*}
        e_{\even}^{(M')} &= \frac{(e_{\even}^{(M')} + e_{\odd}^{(M')}) + (e_{\even}^{(M')} - e_{\odd}^{(M')})}{2} = \frac{1}{2}\prod_{j=1}^{M'} (1 + \cos \theta_j) + \frac{1}{2}\prod_{j=1}^{M'} (1 - \cos \theta_j) > 0 \\
        e_{\even}^{(M')} + e_{\odd}^{(M')} &= \sum_{k=0}^{M'} e_k^{(M')} = \prod_{j=1}^{M'} (1 + \cos \theta_j) > 0.
    \end{align*}
    Using these, we can rewrite the densities as
    \begin{align*}
        \delta_{+,\even}^{\lambda, \nc}(m) &= \frac{1}{2} + \dd \phi(\{\mathbf{y} \in \bT^{M'} : C_m \cdot e_{\even}^{(M')}> g(\mathbf{y}) > 0\}) \\
        &= \frac{1}{2} + \dd \phi(\{\mathbf{y} \in \bT^{M'} : 0> g(\mathbf{y}) > - C_m \cdot e_{\even}^{(M')}\}) \\
        \delta_{+,\odd}^{\lambda, \nc}(m) &= \frac{1}{2} + \begin{cases}
            \dd\phi(\{\mathbf{y} \in \bR^{M'}: 0 > g(\mathbf{y}) > -C_m \cdot e_{\odd}^{(M')}\}) & \text{if }e_{\odd}^{(M')} > 0 \\
            -\dd\phi(\{\mathbf{y} \in \bR^{M'}: 0 > g(\mathbf{y}) > C_m \cdot e_{\odd}^{(M')}\}) & \text{if }e_{\odd}^{(M')} < 0
        \end{cases} \\
        &= \frac{1}{2} + \begin{cases}
            \dd\phi(\{\mathbf{y} \in \bR^{M'}: C_m \cdot e_{\odd}^{(M')} > g(\mathbf{y}) > 0 \}) & \text{if }e_{\odd}^{(M')} > 0 \\
            -\dd\phi(\{\mathbf{y} \in \bR^{M'}: 0 > g(\mathbf{y}) > C_m \cdot e_{\odd}^{(M')}\}) & \text{if }e_{\odd}^{(M')} < 0.
        \end{cases}
    \end{align*}
    (The last equality follows from the symmetry of \(g\) as before.)
    In either case of the sign of \(e_{\odd}^{(M')}\), we have
    \[
        \delta_{+}^{\lambda, \nc}(m) = \frac{\delta_{+,\even}^{\lambda, \nc}(m) + \delta_{+,\odd}^{\lambda, \nc}(m)}{2} = \frac{1}{2} + \frac{1}{2} \dd \phi(\{C_m \cdot e_{\odd}^{(M')} > g(\mathbf{y}) > -C_m \cdot e_{\even}^{(M')}\}) > \frac{1}{2}.
    \]
    For even \(M\), we can use \eqref{eqn:alphanevenM} instead.
    Note that
    \[
    \frac{q e_{\even}^{(M')} + q^{\frac{1}{2}} e_{\odd}^{(M')}}{q-1} + \frac{q^{\frac{1}{2}} e_{\even}^{(M')} + q e_{\odd}^{(M')}}{q-1} = \frac{e_{\even}^{(M')} + e_{\odd}^{(M')}}{1 - q^{-\frac{1}{2}}} > 0.
    \]

    For cumulative densities, when \(M\) is odd, one needs to study the sign of
    \begin{equation}
        \label{eqn:Blambdasum}
        B^\lambda(n;m) = \sum_{k=1}^{n} b^\lambda(k;m) = \sum_{k=1}^{n} q^{\frac{k}{2}} \alpha_k + \sum_{j=1}^{M'} \beta_j \sum_{k=1}^{n} q^{\frac{k}{2}} \sin(k\theta_j + \omega_j).
    \end{equation}
    The above summation is almost identical to the summation \eqref{eqn:blambdasumevenM}, except that the first summation starts from \(k=1\) instead of \(k=0\). 
    Similar computation shows that the constant term of \eqref{eqn:Blambdasum} that contributes to the cumulative density is the same as \eqref{eqn:alphanevenM} and this proves \eqref{eqn:deltalambdaodd}.
    For even \(M\), the bias for the cumulative density comes from the summation
    \[
    \sum_{k=1}^{n} q^{\frac{k}{2}} \alpha_k' = -C_m \left(\frac{(q^2 + q) e_{\even}^{(M')} + 2 q^{\frac{3}{2}} e_{\odd}^{(M')}}{(q-1)^{2}} \right) + q^{\frac{n}{2}} \times \begin{cases}
        C_m \left(\frac{(q^2 + q) e_{\even}^{(M')} + 2 q^{\frac{3}{2}} e_{\odd}^{(M')}}{(q-1)^{2}} \right) & \text{if } 2 \mid n \\
        C_m \left(\frac{2q^{\frac{3}{2}} e_{\even}^{(M')} + (q^2 +q) e_{\odd}^{(M')}}{(q-1)^{2}}\right) & \text{if } 2 \nmid n
    \end{cases}
    \]
    which can be proved using \eqref{eqn:alphanevenM}, and this implies \eqref{eqn:deltalambdaeven}.
\end{proof}

One may ask whether the bias disappears as \(\deg m \to \infty\). We can relate the density to the central \(L\)-value \(L(\frac{1}{2}, \chi_m) = \cL(q^{-\frac{1}{2}}, \chi_m)\).

\begin{proposition}
    \label{prop:gapLfunc}
    \[
     \frac{\prod_{i=1}^{r}(1 - q^{-M_i})}{2^{M'} \prod_{j=1}^{M'} \sin^2 \theta_j} (e_\even^{(M')} + e_\odd^{(M')}) = \frac{\prod_{i=1}^{r}(1 - q^{-M_i})}{\cL(q^{-\frac{1}{2}}, \chi_m)} \times \begin{cases} 1 & \text{if }2 \nmid M \\ 1 - q^{-\frac{1}{2}} & \text{if } 2 \mid M \end{cases}
    \]
\end{proposition}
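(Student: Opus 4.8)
The plan is to evaluate both sides of the claimed identity in closed form using the factorization of $\cL(u,\chi_m)$ from Theorem~\ref{thm:Lfuncquad}, together with the elementary observation (already recorded inside the proof of Corollary~\ref{cor:mainlambda}) that
\[
e_\even^{(M')} + e_\odd^{(M')} = \sum_{k=0}^{M'} e_k^{(M')} = \prod_{j=1}^{M'}(1 + \cos\theta_j),
\]
since the sum of all elementary symmetric polynomials in the variables $\cos\theta_1,\dots,\cos\theta_{M'}$ is $\prod_j(1+\cos\theta_j)$.

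First I would simplify the left-hand side. Using $\sin^2\theta_j = (1-\cos\theta_j)(1+\cos\theta_j)$, the factor $\prod_{j=1}^{M'}(1+\cos\theta_j)$ coming from $e_\even^{(M')} + e_\odd^{(M')}$ cancels the like factor in $\prod_{j=1}^{M'}\sin^2\theta_j$, so the left-hand side equals
\[
\frac{\prod_{i=1}^{r}(1-q^{-M_i})}{2^{M'}\prod_{j=1}^{M'}(1-\cos\theta_j)}.
\]
It therefore suffices to show that $2^{M'}\prod_{j=1}^{M'}(1-\cos\theta_j)$ equals $\cL(q^{-1/2},\chi_m)$ when $M$ is odd, and equals $\cL(q^{-1/2},\chi_m)/(1-q^{-1/2})$ when $M$ is even.

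This last point comes directly from Theorem~\ref{thm:Lfuncquad}: each quadratic factor, evaluated at $u = q^{-1/2}$, becomes
\[
1 - 2\sqrt{q}\,\cos\theta_j\cdot q^{-1/2} + q\cdot q^{-1} = 2 - 2\cos\theta_j = 2(1-\cos\theta_j),
\]
so $\cL(q^{-1/2},\chi_m) = 2^{M'}\prod_{j=1}^{M'}(1-\cos\theta_j)$ for odd $M$, while for even $M$ the extra simple factor $(1-u)$ contributes $(1-q^{-1/2})$, giving $\cL(q^{-1/2},\chi_m) = (1-q^{-1/2})\cdot 2^{M'}\prod_{j=1}^{M'}(1-\cos\theta_j)$. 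Substituting these back into the simplified left-hand side yields the two cases of the proposition.

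There is essentially no obstacle here: the argument is a one-line evaluation in each parity case, and the only point worth a remark is that all the denominators are nonzero — equivalently $\theta_j\neq 0$ for every $j$, hence $\cL(q^{-1/2},\chi_m)\neq 0$ — which holds under the GSH hypothesis in force throughout this section, since the $\theta_j$ are then $\bQ$-linearly independent together with $\pi$ and in particular none of them vanishes.
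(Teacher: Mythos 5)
Your proposal is correct and follows essentially the same route as the paper: both use $e_\even^{(M')}+e_\odd^{(M')}=\prod_j(1+\cos\theta_j)$ together with the evaluation $\cL(q^{-1/2},\chi_m)=\prod_j(2-2\cos\theta_j)$ (times $1-q^{-1/2}$ for even $M$) from Theorem~\ref{thm:Lfuncquad}; your cancellation via $\sin^2\theta_j=(1-\cos\theta_j)(1+\cos\theta_j)$ is just the paper's identity $\frac{1+\cos x}{2\sin^2 x}=\frac{1}{2-2\cos x}$ written out. The closing remark on nonvanishing of the $\theta_j$ under GSH is a harmless extra observation.
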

\begin{proof}
    By Theorem \ref{thm:Lfuncquad}, we have
    \begin{align*}
        \cL(q^{-\frac{1}{2}}, \chi_m) &= \begin{cases} \prod_{j=1}^{M'} (2 - 2 \cos \theta_j) & \text{if } 2 \nmid M \\ (1 - q^{-\frac{1}{2}}) \prod_{j=1}^{M'} (2 - 2 \cos \theta_j) & \text{if } 2 \mid M \end{cases}
    \end{align*}
    and use the identity \(\frac{1 + \cos x}{2 \sin^2 x} = \frac{1}{2 - 2 \cos x}\) to conclude that
    \begin{align*}
        \frac{\prod_{i=1}^{r}(1 - q^{-M_i})}{2^{M'} \prod_{j=1}^{M'} \sin^2 \theta_j} (e_{\even}^{(M')} + e_{\odd}^{(M')}) &= \prod_{i=1}^{r} (1 - q^{-M_i}) \cdot \frac{1 + \cos \theta_1}{2 \sin^2 \theta_1} \cdots \frac{1 + \cos \theta_{M'}}{2 \sin^2 \theta_{M'}} \\
        &= \prod_{i=1}^{r}(1 - q^{-M_i}) \prod_{j=1}^{M'} \frac{1}{2 - 2 \cos \theta_j} \\
        &= \frac{\prod_{i=1}^{r}(1 - q^{-M_i})}{\cL(q^{-\frac{1}{2}}, \chi_m)} \times \begin{cases} 1 & \text{if }2 \nmid M \\ 1 - q^{-\frac{1}{2}} & \text{if } 2 \mid M \end{cases}
    \end{align*}
\end{proof}

From Corollary \ref{cor:mainlambda}, it is clear that the bias becomes smaller (i.e. densities get close to \(\frac{1}{2}\)) if and only if \(\cL(q^{-\frac{1}{2}}, \chi_m) \) gets larger.
Andrade and Keating \cite{andrade2012mean} proved an asymptotic formula of the average of the central \(L\)-values (when \(q \equiv 1 \pmod{4}\)) of fixed degree, which grows linearly in \(\deg m\).
In other words, for any given \(N\), there exists a monic square-free polynomial of degree \(\ge N\) with large central \(L\)-values.
By the work of Bailleul, Devin, Keliher, and Li \cite[Theorem 1.1]{bailleul2024exceptional}, $\cL(u, \chi_m)$ satisfies GSH for almost all monic square-free polynomials $m$ with sufficiently large degree.
Combining these results with Proposition \ref{prop:gapLfunc}, we have the following corollary.
\begin{corollary}
    \label{cor:gapLfunc}
    Assume \(q \equiv 1 \pmod{4}\).
    For any \(\epsilon > 0\), there exists a monic square-free polynomial \(m\) such that \(\delta_+^{\lambda, \nc}(m) < \frac{1}{2} + \epsilon\) and \(\delta_{+}^{\lambda}(m) < \frac{1}{2} + \epsilon\).
\end{corollary}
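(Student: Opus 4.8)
The plan is to read the size of the densities off the closed formulas of Corollary~\ref{cor:mainlambda}, to control the quantities appearing there by the central value $\cL(q^{-1/2},\chi_m)$ via Proposition~\ref{prop:gapLfunc}, and then to invoke the Andrade--Keating mean-value theorem~\cite{andrade2012mean} to produce $m$ --- still with $\cL(u,\chi_m)$ satisfying GSH, so that Corollary~\ref{cor:mainlambda} applies --- for which $\cL(q^{-1/2},\chi_m)$ is as large as we please.

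First I would unwind Corollary~\ref{cor:mainlambda}. In each of the four cases \eqref{eqn:deltalambdaoddnc}--\eqref{eqn:deltalambdaeven} the set whose $\dd\phi$-measure appears is, in the variable $h:=g(\mathbf y)$ (or $h:=G(\mathbf y)$ in the cumulative cases), an honest interval $\{x_1<h(\mathbf y)<x_2\}$ with $x_1<x_2$ --- this uses $e_\even^{(M')}>0$ and $e_\even^{(M')}+e_\odd^{(M')}>0$, both established in the proof of Corollary~\ref{cor:mainlambda}. A short computation combining the endpoint expressions with Proposition~\ref{prop:gapLfunc} shows that in every one of the four cases
\[
x_2-x_1\;\le\;\frac{q^{1/2}}{q^{1/2}-1}\cdot\frac{\prod_{i=1}^{r}(1-q^{-M_i})}{\cL(q^{-1/2},\chi_m)}\;\le\;\frac{c_q}{\cL(q^{-1/2},\chi_m)},\qquad c_q:=\frac{q^{1/2}}{q^{1/2}-1},
\]
so that $\delta_+^{\lambda,\nc}(m)-\tfrac12$ and $\delta_+^{\lambda}(m)-\tfrac12$ each equal $\tfrac12\,\dd\phi(\{h\in I\})$ for an interval $I$ of length at most $c_q/\cL(q^{-1/2},\chi_m)$. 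By~\cite{andrade2012mean}, for $q\equiv1\pmod4$ the average of $\cL(q^{-1/2},\chi_m)$ over monic square-free $m$ of degree $N$ grows linearly in $N$, hence is unbounded; so there are monic square-free $m$ (which we may take to satisfy GSH) with $\cL(q^{-1/2},\chi_m)$ arbitrarily large, and for these the interval $I$ is arbitrarily short.

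It then remains to show that $\dd\phi(\{h\in I\})$ is small when $I$ is short. Writing $h(\mathbf y)=\sum_{j=1}^{M'}\beta_j\sin y_j$ (with $\beta_j$ replaced by $\beta_j'$ in the cumulative cases), pick an index $j_0$ with $|\beta_{j_0}|$ maximal. Freezing the coordinates $y_j$ for $j\ne j_0$ reduces the condition $h(\mathbf y)\in I$ to $\sin y_{j_0}\in J$ for an interval $J$ of length $|I|/|\beta_{j_0}|$ (depending on the frozen coordinates); since $t\mapsto\sin t$ has square-root behaviour at $\pm1$, the preimage $\{y\in\bT:\sin y\in J\}$ has normalized length at most $C_0\sqrt{|J|}$ for an absolute constant $C_0$, uniformly in the position of $J$. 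Integrating over the remaining coordinates gives
\[
\dd\phi(\{h\in I\})\;\le\;C_0\sqrt{\frac{|I|}{|\beta_{j_0}|}}\;\le\;C_0\sqrt{\frac{c_q}{\cL(q^{-1/2},\chi_m)\cdot\max_j|\beta_j|}}.
\]

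\emph{The main obstacle} is making this last bound go to zero: the torus dimension $M'$, the angles $\theta_j$, the amplitudes $\beta_j$ and the interval $I$ all change with $m$, so one needs $\cL(q^{-1/2},\chi_m)\cdot\max_j|\beta_j|\to\infty$ along the chosen $m$'s --- for instance it suffices that $\max_j|\beta_j|$ stay bounded below by a constant depending only on $q$. I expect this to fall out of tracking the explicit constants in the proof of Theorem~\ref{thm:blambdabias}: the base case already gives $\beta_{1,1,1}\ge\tfrac12(1-q^{-1})$, and the $\beta_j$ are expressible through the residues of $\cL^\lambda(u;m)$ at its simple poles $q^{-1/2}e^{\pm i\theta_j}$, yielding explicit formulas from which a lower bound on $\max_j|\beta_j|$ can be extracted; alternatively one restricts to the positive proportion of $m$ on which such a lower bound holds, using the second-moment refinement of the Andrade--Keating estimate. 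Granting this, for $m$ satisfying GSH with $\cL(q^{-1/2},\chi_m)$ large enough we obtain $\dd\phi(\{h\in I\})<2\epsilon$, hence $\delta_+^{\lambda,\nc}(m)<\tfrac12+\epsilon$ and $\delta_+^{\lambda}(m)<\tfrac12+\epsilon$.
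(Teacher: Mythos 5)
Your route is essentially the paper's own: the paper deduces this corollary directly from the density formulas of Corollary~\ref{cor:mainlambda}, the identity of Proposition~\ref{prop:gapLfunc}, and the Andrade--Keating mean value theorem, asserting that a large central value \(\cL(q^{-\frac{1}{2}}, \chi_m)\) shrinks the interval appearing in the density formula and hence pushes the density to \(\tfrac{1}{2}\), with no further argument given. The two issues you flag --- that a short interval only forces a small \(\dd\phi\)-measure if one has a lower bound on \(\max_j |\beta_j|\) (your arcsine-type estimate), and that the polynomial \(m\) produced via Andrade--Keating must also satisfy GSH for Corollary~\ref{cor:mainlambda} to be applicable --- are real but are likewise left unaddressed in the paper, so your proposal is, if anything, more careful than the paper's treatment while following the same approach.
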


\section{Examples}
\label{sec:examples}

We give several examples of the densities \(\delta_{\pm}^{\bullet}(m)\) when \(m\) has a small degree.

\subsection{When \texorpdfstring{\(\deg m = 1\)}{deg m = 1}}

Let \(m(T) = T - a\) be a monic degree 1 polynomial in \(A = \bF_q[T]\).
Then the corresponding Dirichlet $L$-function is $\cL(u, \chi_m) = 1$, so
\[
\cL^\mu(u; m) = \frac{1}{\cL(u, \chi_m)} = 1, \quad \cL^\lambda(u; m) = \frac{1 - u^2}{1 - qu^2} = 1 + \sum_{n \ge 1} (q - 1) \cdot q^{n-1} \cdot u^{2n}.
\]
From the computation, we get
\[
    b^\mu(n;T) = 0, \quad b^\lambda(n; T) = \begin{cases}
        (q - 1) \cdot q^{\frac{n}{2} - 1} & 2 \mid n \\
        0 & 2 \nmid n
    \end{cases}
\]
for all $n \ge 1$.
Especially, \(\mu \chi_m\) is always on tie for all degree, while \(\lambda \chi_m\) is biased towards \(+1\) over \(-1\) for even degree.
This gives non-cumulative densities
\begin{align*}
    \delta_+^{\mu, \nc}(m) = \delta_-^{\mu, \nc}(m) = 0,\,\delta_0^{\mu, \nc}(m) = 1 \\
    \delta_+^{\lambda, \nc}(m) = \delta_-^{\lambda, \nc}(m) = \frac{1}{2},\,\delta_0^{\lambda, \nc}(m) = 1.
\end{align*}
Also, the above formula implies that \(B^{\lambda}(n; T) > 0\) for all \(n > 1\), so the cumulative densities are
\begin{align*}
    \delta_+^{\mu}(m) = \delta_-^{\mu}(m) = 0,\,\delta_0^{\mu}(m) = 1 \\
    \delta_+^{\lambda}(m) = 1,\,\delta_-^{\lambda}(m) = \delta_0^{\lambda}(m) = 0.
\end{align*}

\subsection{When \texorpdfstring{\(\deg m = 2\)}{deg m = 2}}

Let \(m\) be a monic non-square polynomial.
By Theorem~\ref{thm:Lfuncquad}, \(\cL(u, \chi_m) = 1 - u\) and
\begin{align*}
    \cL^\mu(u; m) &= \frac{1}{1 - u} = \sum_{n \ge 0} u^n, \\
    \cL^\lambda(u; m) &= \frac{1 - u^{4}}{1 - qu^2} \cdot \frac{1}{1-u} \\
    &= (1 + u + u^2 + u^3) (1 + qu^2 + q^2 u^4 + \cdots) = 1 + u + \sum_{k \ge 1} (q + 1) \cdot q^{k-1} \cdot (u^{2k} + u^{2k + 1}).
\end{align*}
From the computation, we get
\[
    b^\mu(n; m) = 1, \quad b^\lambda(n; m) = \begin{cases}
        1 & n = 1 \\
        (q + 1) \cdot q^{\lfloor \frac{n}{2} \rfloor - 1} & n \ge 2
    \end{cases}
\]
for all \( n \ge 1\). Especially, it is always biased towards \(+1\) over \(-1\), hence \(\delta_+^{\mu, \nc}(m) = \delta_+^{\lambda, \nc}(m) = \delta_+^{\mu}(m) = \delta_+^{\lambda}(m) = 1\) while all other densities are zero.

\subsection{When \texorpdfstring{\(\deg m = 3\)}{deg m = 3}}

The corresponding Dirichlet \(L\)-function has degree 2, and it can be factored as \[\cL(u, \chi_m) = (1 - \sqrt{q} e^{i \theta}u)(1 - \sqrt{q} e^{-i\theta}u)\]
for some \(\theta \in (0, \pi)\).
Following the computations in the proof of Theorem~\ref{thm:blambdabias}, we can compute \(\cL^\mu\) and \(\cL^\lambda\) as
\begin{align*}
    \cL^\mu(u;m) &= \frac{1}{(1 - \sqrt{q} e^{i\theta} u) (1 - \sqrt{q} e^{-i\theta}u)} = \left(\sum_{n \ge 0} q^{\frac{n}{2}} e^{in\theta} u^n\right) \left(\sum_{n \ge 0} q^{\frac{n}{2}} e^{-in\theta} u^n\right) \\
    &= \sum_{n \ge 0} \frac{q^{\frac{n}{2}} \sin ((n+1) \theta)}{\sin \theta} u^n \\
    \cL^\lambda(u;m) &= \frac{1 - u^6}{1 - qu^2} \cL^\mu(u;m) \\
    &= (1 - u^6) \left[ \sum_{k \ge 0} \left(\frac{q^{k} (1 - \cos((2k+2)\theta))}{2\sin^2 \theta}\right) u^{2k} + \left(\frac{q^{k + \frac{1}{2}}(\cos \theta - \cos((2k+3)\theta))}{2\sin^2 \theta}\right) u^{2k+1}  \right]
\end{align*}
and it gives
\begin{align*}
    b^\mu(n;m) &= \frac{q^n \sin((n+1)\theta)}{\sin\theta}, \\
    b^\lambda(n;m) &= \begin{cases}
        \dfrac{q^{\frac{n}{2}}(1 - q^{-3} - \cos((n+2)\theta) + q^{-3} \cos((n-4)\theta))}{2\sin^2\theta} & \text{if }2 \mid n \\
        \dfrac{q^{\frac{n}{2}}((1 - q^{-3}) \cos \theta - \cos((n+2)\theta) + q^{-3} \cos((n-4)\theta))}{2\sin^2\theta} & \text{if }2 \nmid n
    \end{cases}
\end{align*}
for \(n \ge 6\).

When \(2 \mid n\), \(b^\lambda(n;m) > 0\) is equivalent to
\begin{align*}
    &1 - q^{-3} - \cos((n + 2)\theta) + q^{-3} \cos((n-4)\theta) > 0 \\
    &\Leftrightarrow 1 - q^{-3} - [\cos((n-4)\theta) \cos(6\theta) - \sin((n-4)\theta) \sin(6\theta)] + q^{-3} \cos((n-4)\theta) > 0 \\
    &\Leftrightarrow 1 - q^{-3} + \sqrt{(-\cos (6\theta) + q^{-3})^{2} + \sin^2(6\theta)} \cdot \sin(n \theta + \omega) > 0 \\
    &\Leftrightarrow \sin (n \theta + \omega) > -\frac{1 - q^{-3}}{\sqrt{1 - 2q^{-3} \cos(6\theta) + q^{-6}}}
\end{align*}
where \(\omega \in \bR\) is an angle only depends on \(\theta\) and \(q\).
Then the density of such (even) \(n\) would be
\[
\frac{1}{\pi}\left[\frac{\pi}{2} + \sin^{-1}\left(\frac{1 - q^{-3}}{\sqrt{1 - 2q^{-3} \cos(6\theta) + q^{-6}}}\right)\right] = \frac{1}{2} + \frac{1}{\pi}\sin^{-1}\left(\frac{1 - q^{-3}}{\sqrt{1 - 2q^{-3} \cos(6\theta) + q^{-6}}}\right).
\]
Similarly, for odd \(n\) we have \(b^\lambda(n;m) > 0\) if and only if
\[
\sin(n\theta + \omega') > -\frac{(1 - q^{-3})\cos \theta}{\sqrt{1 - 2q^{-3} \cos(6\theta) + q^{-6}}}
\]
for some \(\omega' \in \bR\) only depends on \(\theta, q\), and the density of such \(n\) would be
\[
\frac{1}{2} + \frac{1}{\pi}\sin^{-1}\left(\frac{(1 - q^{-3})\cos\theta}{\sqrt{1 - 2q^{-3} \cos(6\theta) + q^{-6}}}\right).
\]
Hence \(\delta_+^{\lambda, \nc}(m)\) would be the average of these two numbers, which is
\[
    \delta_+^{\lambda, \nc}(m) = \frac{1}{2} + \frac{1}{2\pi} \left(\sin^{-1} (a) + \sin^{-1} (a \cos \theta)\right)
\]
where
\[
1 > a = \frac{1 - q^{-3}}{\sqrt{1 - 2q^{-3} \cos(6\theta) + q^{-6}}} > 0.
\]
By applying~\eqref{eqn:sinasum} again, one can compute \(B^\lambda(n; m)\) as
\begin{align*}
    2 \sin^2\theta \cdot q^{-\frac{n}{2}}B^\lambda(n;m) &= \sqrt{\frac{1 - 2q^{-3} \cos(6\theta) + q^{-6}}{1 - 2q^{-\frac{1}{2}}\cos\theta + q^{-1}}} \sin(n\theta + \omega') + \frac{1 - q^{-3}}{q - 1} \times \begin{cases}
        q + q^{\frac{1}{2}} \cos \theta & \text{if } 2 \mid n \\
        q \cos \theta + q^{\frac{1}{2}} & \text{if } 2 \nmid n
    \end{cases} \\
    &\quad+ o(1),
\end{align*}
and Corollary~\ref{cor:sindensity} gives the cumulative density
\[
\delta_+^{\lambda}(m) = \frac{1}{2} + \frac{\sin^{-1}(a_{\even}) + \sin^{-1}(a_{\odd})}{2 \pi}
\]
where
\begin{align*}
    a_\even &= \sqrt{\frac{1 - 2q^{-1/2}\cos\theta + q^{-1}}{1 - 2q^{-3} \cos(6\theta) + q^{-6}}} \cdot \frac{(1- q^{-3})(q + q^{\frac{1}{2}}\cos\theta)}{q - 1} \\
    a_\odd &= \sqrt{\frac{1 - 2q^{-1/2}\cos\theta + q^{-1}}{1 - 2q^{-3} \cos(6\theta) + q^{-6}}} \cdot \frac{(1- q^{-3})(q \cos \theta + q^{\frac{1}{2}})}{q - 1} 
\end{align*}
and we extend \(\sin^{-1}(x)\) to \(\bR\) by defining \(\sin^{-1}(x) = \frac{\pi}{2}\) for \(x \ge 1\) and \(\sin^{-1}(x) = -\frac{\pi}{2}\) for \(x \le -1\).

For example, the Dirichlet \(L\)-function for \(q = 5\) and \(m = T^3 + T + 4\) is
\[
    \cL(u, \chi_m) = 1 + 3u + 5u^2 = \left(1 - \frac{-3 + \sqrt{11}i}{2} u\right) \left(1 - \frac{-3 - \sqrt{11}i}{2} u\right) = (1 - \sqrt{5} e^{i\theta})(1 - \sqrt{5} e^{-i\theta})
\]
where \(\theta = \cos^{-1}\left(-\frac{3}{\sqrt{20}}\right) \in (0, \pi)\).
Since \(\tan\theta = -\frac{\sqrt{11}}{3}\) is not on the Calcut's list~\cite[p. 17]{calcut2006rationality}, we have \(\theta \not\in \bQ\pi\).
In this case, \(a = \frac{31\sqrt{3}}{54},\,a_\even = \frac{217\sqrt{3}}{144\sqrt{5}} > 1\) and \(a_\odd = -\frac{31\sqrt{3}}{144}\), so the densities are
\begin{align*}
    \delta_+^{\lambda, \nc}(T^3 + T + 4) &= \frac{1}{2} + \frac{1}{2\pi} \left(\sin^{-1}\left(\frac{31\sqrt{3}}{54}\right) - \sin^{-1}\left(\frac{31\sqrt{15}}{180}\right)\right) \approx 0.6168\dots \\
    \delta_+^{\lambda}(T^3 + T + 4) &= \frac{1}{2} + \frac{1}{2\pi} \left(\frac{\pi}{2} - \sin^{-1}\left(\frac{31\sqrt{3}}{144}\right)\right)\approx 0.6892\dots
\end{align*}
We compare this with numerical computations.
From the \(L\)-function, one can prove the following recurrence relation satisfied by \(b_n = b^\lambda(n; T^3 + T + 4)\): 
\[
b_n = -3b_{n-1} + 15 b_{n-3} + 25b_{n-4}
\]
which holds for \(n \ge 7\), and with the first few values are \((b_1, b_2, b_3, b_4, b_5, b_6) = (-3, 9, -12, 16, 12, 8)\), we can compute further values efficiently\footnote{Using fast matrix exponentiation, \(b_n\) can be computed in \(O(\log n)\) time.}.
Table~\ref{tab:T3T4} shows that the numerical computations converge to the above densities as \(n\) increases.

\begin{table}
    \centering
    \begin{tabular}{c|cccc}
    \toprule
    \(n\) & \(\delta_+^{\lambda, \nc}(m)\) & \(\delta_+^{\lambda}(m)\) & \(\delta_+^{\mu, \nc}(m)\) & \(\delta_+^{\mu}(m)\) \\ 
    \midrule
    10    & 0.6000 & 0.6000 & 0.5000 & 0.4000 \\
    100   & 0.6200 & 0.6800 & 0.4900 & 0.4900 \\
    1000  & 0.6160 & 0.6900 & 0.4990 & 0.5000 \\
    10000 & 0.6168 & 0.6891 & 0.4999 & 0.4999 \\
    \bottomrule
    \end{tabular}
    \caption{Numerical computations of \(\lambda\) and \(\mu\)-densities  for \(m = T^3 + T + 4 \in \bF_5[T]\) using polynomials of degree up to \(10, 100, 1000, 10000\).}
    \label{tab:T3T4}
\end{table}

Note that \(a_\even > 1\) indicates 100\% of the even degree polynomials \(f\) satisfy \(\lambda(f) \chi_m(f) = +1\).
In fact, \(b_n' = b_{2n}\) satisfies the recurrence relation \(b_n' = 4b_{n-1}' - 20 b_{n-2}' + 125 b_{n-3}'\), and using characteristic polynomial with the first few values \((b_1', b_2', b_3') = (9, 16, 8)\), we get a general formula \(b_n' = \frac{2 \cdot 5^{n+1}}{11} (1 - \cos((n+1)\theta))\) where \(\theta = -\tan^{-1} \sqrt{99} \in (0, \pi)\). This shows \(b_n' > 0\) for all \(n \ge 1\).

As shown in~\cite{cha2008chebyshev}, for some \(q\) and \(m\), \(\cL(u, \chi_m)\) may not satisfy the GSH.
If we write \(\cL(u, \chi_m) = 1 - 2 q^{\frac{1}{2}} \cos \theta u + q u^2\), then \(2 q^{\frac{1}{2}} \cos\theta \in \bZ\) and \(\cos \theta, \sin\theta, \tan\theta\) have to be rational or at most quadratic irrational.
Again, Calcut~\cite[p. 17]{calcut2006rationality} provided a complete \emph{finite} list of angles \(\theta \in \bQ\pi \cap (0, \pi)\) where \(\tan \theta \) is at most a quadratic irrational, and some of them corresponds to \(L\)-functions of Dirichlet characters. 
In this case, the density can be different form the case where the GSH holds.
For example, let \(q = 3\) and \(m = T^3 - T + 1\), which is Example 5.1 of~\cite{cha2008chebyshev}.
The corresponding Dirichlet $L$-function of \(\chi_m = \chi_{T^3 - T + 1}\) is \[\cL(u, \chi_m) = 1 - 3 u + 3 u^2 = (1 - \sqrt{3} \zeta_{12}u)(1 - \sqrt{3} \zeta_{12}^{-1}u).\]
Then
\begin{align*}
    \cL^\mu(u;m) &= \left(\sum_{n \ge 0} 3^{n/2} \zeta_{12}^{n} u^n\right)\left( \sum_{n \ge 0} 3^{n/2} \zeta_{12}^{-n} u^{n}\right) = \sum_{n \ge 0} \frac{3^{\frac{(n+1)}{2}}(\zeta_{12}^{n+1} - \zeta_{12}^{-(n+1)})}{\sqrt{3}(\zeta_{12} - \zeta_{12}^{-1})} u^n \\
    &= \sum_{n \ge 0} 2 \cdot 3^{n/2} \sin \left(\frac{(n+1)\pi}{6}\right) u^n = \sum_{n \ge 0} b^\mu(n;m) u^n \\
    \cL^\lambda(u;m) &= \frac{1 - u^6}{1 - 3u^2} \cdot \frac{1}{1 - 3u + 3u^2} \\
    &= (1 - u^6) \left[\sum_{k \ge 0} 2 \cdot 3^k \cdot \left(\sin \frac{\pi}{6} + \sin \frac{3\pi}{6} + \cdots + \sin \frac{(2k+1)\pi}{6}\right) u^{2k} \right. \\
    &\quad+ \left. 2 \cdot 3^{k + \frac{1}{2}} \cdot \left(\sin \frac{2 \pi}{6} + \sin \frac{4\pi}{6} + \cdots + \sin \frac{(2k+2)\pi}{6}\right)u^{2k+1}\right] \\
    &=: (1 - u^6) \sum_{n \ge 0} c_n u^n \\
    &= \sum_{n \ge 0} (c_n - c_{n-6}) u^n
\end{align*}
where
\begin{align*}
    b^\lambda(n;m) = b^\lambda({12a + k};m) = \begin{cases}
        3^{6a} & k = 0 \\
        3^{6a + 1} & k = 1 \\
        2 \cdot 3^{6a + 1} & k = 2 \\
        3^{6a + 2} & k = 3, 4 \\
        0 & k = 5, 11 \\
        - 3^{6a + 3} & k = 6\\
        -3^{6a + 4} & k = 7 \\
        -2 \cdot 3^{6a + 4} & k = 8 \\
        -3^{6a + 5} & k = 9, 10
    \end{cases}, \quad 
    c_n = c_{12a + k} = \begin{cases}
        3^{6a} & k = 0 \\
        3^{6a + 1} & k = 1\\
        3^{6a + 2} & k = 2 \\
        2 \cdot 3^{6a + 2} & k = 3 \\
        4 \cdot 3^{6a + 2} & k = 4 \\
        2 \cdot 3^{6a + 3} & k = 5 \\
        3^{6a + 4} & k = 6, 7, 8 \\
        0 & k = 9, 10, 11
    \end{cases}
\end{align*}
(and we set \(c_n = 0\) for \(n < 0\)).
The formula for \(b^\lambda(n;m)\) gives \(\delta_+^{\mu, \nc}(m) = \delta_-^{\mu, \nc}(m) = \frac{5}{12}\) and \(\delta_0^{\mu, \nc}(m) = \frac{1}{6}\).
Also, \(B^\lambda(n;m) = \sum_{1 \le k \le n} b^\lambda(k;m)\) is positive / zero / negative if and only if \(n \equiv 1, \dots, 6 \pmod{12}\) / \(n \equiv 0, 6 \pmod{12}\) / \(n \equiv 7, \dots, 11\pmod{12}\), where we get\footnote{Note that the tie breaks if we include the degree 0 constant polynomial \(f(T) = 1\) in the counting, and the density becomes \(\frac{7}{12}, 0, \frac{5}{12}\).} \[\delta_+^{\mu, \nc}(m) = \delta_-^{\mu, \nc}(m) = \frac{5}{12},\, \delta_0^{\mu, \nc}(m) = \frac{1}{6}.\]

For $\lambda$, one can check that \(c_n - c_{n-6} > 0\) if \(n \equiv 0, 1, \dots, 8 \pmod{12}\) and \(c_n - c_{n-6} < 0\) otherwise, which gives \[\delta_+^{\lambda, \nc}(m) = \frac{3}{4},\,\delta_-^{\lambda, \nc}(m) = \frac{1}{4},\,\delta_0^{\lambda, \nc}(m) = 0.\]
For the cumulative density, we have
\begin{align*}
\frac{\cL^\lambda(u; m) - 1}{1 - u} &= \frac{(1 - u^6)(\sum_{n \ge 1} c_n u^n) - 1}{1 - u} \\
&=(1 + u + \cdots + u^5) \left(\sum_{n \ge 1} c_n u^n\right) - (1 + u + u^2 + \cdots) \\
&=c_1 u + (c_2 + c_1)u^2 + \cdots + (c_5 + \cdots + c_1) u^5 + \sum_{n \ge 6} (c_n + c_{n-1} + \cdots + c_{n-5} - 1) u^n
\end{align*}
where all the coefficients are positive, so the cumulative count is completely biased towards \(+1\) and \(\delta_+^{\lambda}(m) = 1\).

\subsection{Higher degree example}

Consider \(m = (T^2 + 1)(T^3 + 2T + 1) \in \bF_3[T]\), which has degree 5 and is not irreducible.
The corresponding Dirichlet \(L\)-function is
\begin{align*}
    \cL(u, \chi_m) &= 1 + u + 4u^2 + 3u^3 + 9u^4 \\
    &= (1 - u + 3u^2) (1 + 2u + 3u^2) \\
    &= (1 - \sqrt{3} e^{i\theta_1} u)(1 - \sqrt{3} e^{-i\theta_1} u)(1 - \sqrt{3} e^{i\theta_2} u)(1 - \sqrt{3} e^{-i\theta_2} u)
\end{align*}
where
\[
e^{i\theta_1} = \frac{1 + \sqrt{-11}}{2\sqrt{3}} \Leftrightarrow \cos\theta_1 = \frac{1}{2\sqrt{3}}, \quad e^{i\theta_2} = \frac{-1 + \sqrt{-2}}{\sqrt{3}} \Leftrightarrow \cos\theta_2 = -\frac{1}{\sqrt{3}}.
\] 
One can check that \(\pi, \theta_1, \theta_2\) are linearly independent over \(\bQ\), i.e. GSH holds for \(\cL(u, \chi_m)\).
Minimal polynomial of \(e^{i\theta_1}\) and \(e^{i\theta_2}\) are \(x^4 + \frac{5}{3}x^2 + 1\) and \(x^4 + \frac{2}{3} x^2 + 1\) respectively, which are not monic. Hence \(e^{i\theta_1}, e^{i\theta_2}\) cannot be roots of unity and \(\theta_1, \theta_2 \not\in \bQ \pi\).
To show the linear independence, it is enough to show that \(e^{i(m\theta_1 + n\theta_2)} = 1 \Leftrightarrow \left(\frac{1 + \sqrt{-11}}{2\sqrt{3}}\right)^m \left(\frac{-1 + \sqrt{-2}}{\sqrt{3}}\right)^n = 1\) implies \((m, n) = (0, 0)\), for \(m, n \in \bZ\). If the relation holds, then \((1 + \sqrt{-11})^m \in \bQ(\sqrt{3}, \sqrt{-2})\) and this implies \(m = 0\): if not, \(\theta_1 \not \in \bQ\pi\) implies that \(\Im((1 + \sqrt{-11})^m) \ne 0\) and \(\sqrt{-11} \in \bQ(\sqrt{3}, \sqrt{-2})\), a contradiction. Then \(\left(\frac{-1 + \sqrt{-2}}{\sqrt{3}}\right)^n = 1\) and \(\theta_2 \not \in \bQ\pi\) implies \(n = 0\).

The \(L\)-functions for (non-cumulative) \(\lambda\)-bias is (Corollary~\ref{cor:Llambda})
\[
    \cL^\lambda(u;m) = \frac{(1 - u^4)(1 - u^6)}{(1 - 3u^2)(1 - u + 3u^2)(1 + 2u + 3u^2)} 
\]
We have
\begin{align*}
    C_m = \frac{(1 - 3^{-2})(1 - 3^{-3})}{2^2 \cdot \sin^2 \theta_1 \cdot \sin^2 \theta_2} = \frac{104}{297}, \quad
    e_{\even}^{(M')} = 1 + \cos\theta_1 \cos\theta_2 = \frac{5}{6}, \quad
    e_{\odd}^{(M')} = \cos\theta_1 + \cos\theta_2 = -\frac{1}{2\sqrt{3}}
\end{align*}
and using partial fraction, we can write \(b_n := b^\lambda(n; m)\) as a trigonometric form.
Without denominator, we have
\begin{align*}
&\frac{1}{(1 - 3u^2)(1 - u + 3u^2)(1 + 2u + 3u^2)} \\
&= \frac{3}{44} \frac{5 - 3u}{1 - 3u^2} + \frac{8 - 15u}{33(1 -u + 3u^2)} + \frac{5 + 3u}{12(1 + 2u + 3u^2)} \\
&= \sum_{n \ge 0} \left[\frac{3}{88}((5 - \sqrt{3}) + (-1)^n (5 + \sqrt{3})) + \frac{2\sqrt{3}}{11} \sin(n\theta_1 + \omega_1') + \frac{\sqrt{3}}{4} \sin(n\theta_2 + \omega_2')\right] 3^{\frac{n}{2}} u^n
\end{align*}
and multiplying by \(1 - u^4\) and \(1 - u^6\) gives (apply \eqref{eqn:sinadd} twice)
\begin{align*}
    b_n &= 3^{\frac{n}{2}} \left(\alpha_n + \beta_{1} \sin(n\theta_1 + \omega_1) + \beta_{2} \sin(n\theta_2 + \omega_2) \right) \\
    \alpha_n &= \begin{cases}
        \frac{260}{891} & \text{if } 2 \mid n \\
        -\frac{52\sqrt{3}}{891} & \text{if } 2 \nmid n
    \end{cases} \\
    \beta_{1} &= \frac{2\sqrt{3}}{11} \prod_{i=1}^{2} \sqrt{1 - 2 \cdot 3^{-M_i} \cos (2M_i \theta_1) + 3^{-2M_i}} = \frac{40\sqrt{5}}{297} \\
    \beta_{2} &= \frac{\sqrt{3}}{4} \prod_{i=1}^{2} \sqrt{1 - 2 \cdot 3^{-M_i} \cos (2M_i \theta_2) + 3^{-2M_i}} = \frac{2\sqrt{38}}{27}
\end{align*}
for \(n \ge 10\). Similarly, the cumulative bias \(B_n = B^\lambda(n; m)\) can be computed as
\begin{align*}
    B_n &= 3^{\frac{n}{2}} \left(\alpha_n' + \beta_1' \sin(n\theta_1 + \omega_1') + \beta_2' \sin(n\theta_2 + \omega_2') + o(1)\right) \\
    \alpha_n' &= \begin{cases}
        \frac{104}{297} & \text{if } 2 \mid n \\
        \frac{52\sqrt{3}}{891} & \text{if } 2 \nmid n
    \end{cases} \\
    \beta_1' &= \beta_1 \cdot \frac{1}{\sqrt{1 - 2 \cdot 3^{-1/2} \cdot \cos\theta_1 + 3^{-1}}} = \frac{40\sqrt{5}}{297}  \\
    \beta_2' &= \beta_2 \cdot \frac{1}{\sqrt{1 - 2 \cdot 3^{-1/2} \cdot \cos\theta_2 + 3^{-1}}} = \frac{2\sqrt{19}}{27}
\end{align*}
Now we can use Corollary~\ref{cor:mainlambda} to compute the densities.
The area can be computed by writing them as single-variable integrals.
\begin{align*}
    \delta_+^{\lambda, \nc}(m) &= \frac{1}{2} + \frac{1}{2} \dd \phi \left(\left\{\frac{260}{891} > \frac{40\sqrt{5}}{297} \sin (x) + \frac{2\sqrt{38}}{27} \sin (y) > \frac{52\sqrt{3}}{891}\right\}\right) \approx 0.584867, \\
    \delta_+^{\lambda}(m) &= \frac{1}{2} + \frac{1}{2} \dd \phi \left(\left\{\frac{104}{297} > \frac{40\sqrt{5}}{297} \sin (x) + \frac{2\sqrt{19}}{27} \sin (y) > -\frac{52\sqrt{3}}{891}\right\}\right) \approx 0.739345.
\end{align*}
As in the previous example, we can compute \(b_n\) efficiently using the recurrence relation
\[
b_n = -b_{n-1} - b_{n-2} + 3b_{n-4} + 9b_{n-5} + 27b_{n-6}
\]
with initial terms \((b_0, b_1, b_2, b_3, b_4, b_5, b_6) = (1, -1, 0, 1, 1, 4, 12)\).
Table~\ref{tab:deg5} shows the approximations of \(\delta_+^{\lambda, \nc}(m)\) and \(\delta_+^\lambda(m)\) that converge to the actual densities as \(n\) increases.

\begin{table}
    \centering
    \begin{tabular}{c|cccc}
    \toprule
    \(n\) & \(\delta_+^{\lambda, \nc}(m)\) & \(\delta_+^{\lambda}(m)\) & \(\delta_+^{\mu, \nc}(m)\) & \(\delta_+^{\mu}(m)\) \\ 
    \midrule
    10    & 0.7000 & 0.6000 & 0.5000 & 0.5000 \\
    100   & 0.5500 & 0.7100 & 0.5500 & 0.4800 \\
    1000  & 0.5820 & 0.7360 & 0.5000 & 0.5010 \\
    10000 & 0.5849 & 0.7382 & 0.5005 & 0.4991 \\
    \bottomrule
    \end{tabular}
    \caption{Numerical computations of \(\lambda\) and \(\mu\)-densities for \(m = (T^2 + 1)(T^3 + 2T + 1) \in \bF_3[T]\) using polynomials of degree up to \(10, 100, 1000, 10000\).}
    \label{tab:deg5}
\end{table}

% --- Bibliography ---

% Start a bibliography with one item.
% Citation example: "\cite{williams}".
% \newpage
\bibliographystyle{acm} % We choose the "plain" reference style
\bibliography{refs} % Entries are in the refs.bib file

{Department of Mathematics, University of California---Berkeley, Berkeley, CA 94720, USA}

\href{mailto:seewoo5@berkeley.edu}{seewoo5@berkeley.edu}

% --- Document ends here ---

\end{document}